\documentclass{amsart}
\usepackage{amssymb,amsmath, amsfonts,amsrefs, tikz, epsfig, float}
\usepackage{amsthm}
\usepackage{mathrsfs}
\usepackage{enumitem, indentfirst}
\usepackage[fleqn,tbtags]{mathtools}
\usepackage{color}
 \newtheorem{theorem}{Theorem}[section]
 \newtheorem{corollary}[theorem]{Corollary}
 \newtheorem{lemma}[theorem]{Lemma}
 \newtheorem{proposition}[theorem]{Proposition}

 \theoremstyle{definition}
 \newtheorem{example}{Example}[section]
 
 \theoremstyle{remark}
 \newtheorem{remark}[theorem]{Remark}
  \numberwithin{equation}{section}

\newenvironment{enumi}{\begin{enumerate}[label=\textup{(\roman*)}]}{\end{enumerate}}
\newenvironment{enuma}{\begin{enumerate}[label=\textup{(\alph*)}]}{\end{enumerate}}

\renewcommand{\epsilon}{\varepsilon}
\renewcommand{\phi}{\varphi}
\renewcommand{\theta}{\vartheta}
\newcommand{\balg}{\mathscr{B}}
\DeclareMathOperator{\sform}{\mathfrak{s}}
\DeclareMathOperator{\tform}{\mathfrak{t}}

\DeclareMathOperator{\wform}{\mathfrak{w}}

\DeclarePairedDelimiterX\sipt[2]{(}{)_{\tform}}{#1\,\delimsize\vert\,#2}
\DeclarePairedDelimiterX\sipv[2]{(}{)_{v}}{#1\,\delimsize\vert\,#2}
\DeclarePairedDelimiterX\sipw[2]{(}{)_{w}}{#1\,\delimsize\vert\,#2}

\newcommand{\alg}{\mathscr{A}}

\newcommand{\abs}[1]{\lvert#1\rvert}
\newcommand{\dupN}{\mathbb{N}}

\newcommand{\seq}[1]{(#1_{n})_{n\in\dupN}}

\newcommand{\dupR}{\mathbb{R}}
\newcommand{\dupC}{\mathbb{C}}

\newcommand{\ran}{\operatorname{ran}}

\newcommand{\lefpoz}{\mathscr{L}_+(E,F)}

\newcommand{\sigef}{\sigma(E,F)}
\newcommand{\sigfe}{\sigma(F,E)}
\newcommand{\hil}{\mathcal H}

\newcommand{\hila}{\hil_A}
\newcommand{\hilc}{\hil_C}

\DeclarePairedDelimiterX\sip[2]{(}{)}{#1\,\delimsize\vert\,#2}
\DeclarePairedDelimiterX\siptilde[2]{(}{)_{\!_{\widetilde{A}}}}{#1\,\delimsize\vert\,#2}
\DeclarePairedDelimiterX\sipf[2]{(}{)_{f}}{#1\,\delimsize\vert\,#2}
\DeclarePairedDelimiterX\sipg[2]{(}{)_{g}}{#1\,\delimsize\vert\,#2}
\DeclarePairedDelimiterX\siptw[2]{(}{)_{\tform+\wform}}{#1\,\delimsize\vert\,#2}
\DeclarePairedDelimiterX\set[2]{\{}{\}}{#1\,:\,#2}
\DeclarePairedDelimiterX\dual[2]{\langle}{\rangle}{#1,#2}
\DeclarePairedDelimiterX\sipa[2]{(}{)_{\!_A}}{#1\,\delimsize\vert\,#2}
\DeclarePairedDelimiterX\sipc[2]{(}{)_{\!_C}}{#1\,\delimsize\vert\,#2}
\DeclarePairedDelimiterX\sipab[2]{(}{)_{\!_{A+B}}}{#1\,\delimsize\vert\,#2}
\DeclarePairedDelimiterX\sipb[2]{(}{)_{\!_B}}{#1\,\delimsize\vert\,#2}
\newcommand{\anti}[1]{\bar{#1}'}

\allowdisplaybreaks
\title[Means of operators ]{Operators on anti-dual pairs:\\ Means of positive operators}

\author[Zs. Tarcsay]{Zsigmond Tarcsay}

\address{%
Zs. Tarcsay \\ Department of Mathematics\\ Corvinus University of Budapest\\ IX. F\H ov\'am t\'er 13-15.\\ Budapest
H-1093 \\ Hungary\\ and Department of Applied Analysis  and Computational Mathematics\\ E\"otv\"os Lor\'and University\\ P\'azm\'any P\'eter s\'et\'any 1/c.\\ Budapest H-1117\\ Hungary}
\email{zsigmond.tarcsay@uni-corvinus.hu}

\author[\'A. G\"ode]{\'Abel G\"ode}

\address{%
\'A. G\"ode \\ Department of Applied Analysis  and Computational Mathematics\\ E\"otv\"os Lor\'and University\\ P\'azm\'any P\'eter s\'et\'any 1/c.\\ Budapest H-1117\\ Hungary}
\email{godeabel@student.elte.hu}

\subjclass[2020]{Primary 47A64; Secondary 47B65, 46L30}
\keywords{anti-dual pair; positive operator; parallel sum; operator means}

\begin{document}

\begin{abstract}
We develop a theory of Kubo-Ando type means for positive operators on anti-dual pairs. This framework extends the classical theory of operator means beyond bounded Hilbert space operators and provides a common setting for several concrete classes of positive objects, including nonnegative sesquilinear forms and states on $C^*$-algebras. 
We also introduce a Busch-Gudder type strength function for positive operators on anti-dual pairs and investigate its interaction with operator means.
Applications are given to Hilbert space operators, nonnegative forms, and representable positive functionals.
\end{abstract}

\maketitle

\section{Introduction}

The theory of operator means, developed by Kubo and Ando in their seminal
work \cite{KuboAndo}, provides an axiomatic framework for binary operations on
the cone of bounded positive operators on a Hilbert space. Its basic principles (i.e., monotonicity, the transformer inequality, and continuity from above) bring
under a common perspective several important constructions, most notably the
arithmetic, harmonic, and geometric means. Since its introduction, the theory
has become an indispensable tool in operator theory and matrix analysis, with
close connections to operator-monotone functions, inequalities for positive
operators, nonnegative forms, and mathematical physics.

The purpose of the present paper is to develop a Kubo--Ando-type theory in a
considerably more general framework: that of positive operators on anti-dual
pairs. An anti-dual pair consists of two complex vector spaces linked by a
nondegenerate sesquilinear pairing, but need not carry an underlying norm or
Hilbert space structure. Nevertheless, positive operators between the two
members of the pair retain many of the fundamental features of bounded positive
Hilbert space operators. More importantly, this framework provides a common
operator-theoretic realization of several classes of positive objects that are
usually treated separately. These include bounded positive operators on Hilbert
spaces, nonnegative sesquilinear forms on arbitrary vector spaces,  states on $C^*$-algebras, or more generally, representable positive functionals
on $*$-algebras (without any topology).

A principal tool in this extension is the Hilbert space canonically associated with a positive operator; cf. \cite{TarcsayMathNach}. The resulting factorization makes it possible to transfer suitable Hilbert space constructions to the anti-dual-pair setting without imposing any additional structure on the original spaces. Using this method, we introduce connections and means on the cone $\lefpoz$ of positive operators. Besides the arithmetic mean, we study the parallel sum and the associated harmonic mean, and establish their fundamental variational and order-theoretic properties. We then define the geometric mean through the positive contractions associated with the two operators on the auxiliary
Hilbert space. In particular, the familiar inequalities
\[
A!B\leq A\#B\leq A\nabla B
\]
remain valid in the present generality.

The second main theme of the paper is the Busch--Gudder strength function.
Originally introduced for effects on Hilbert spaces
\cite{busch1999effects}, the strength of a positive operator in a given
direction measures the largest rank-one positive operator in that direction
which is dominated by the operator. We extend this notion to positive operators
on anti-dual pairs and relate it to the order structure. We then investigate its behaviour under the operator means introduced above.

For the parallel sum and the harmonic mean, we prove the exact identities
\[
\lambda_{A:B}=\lambda_A:\lambda_B,
\qquad
\lambda_{A!B}=\lambda_A!\lambda_B,
\]
thereby extending the corresponding results of Ramanantoanina and Titkos
\cite{titkosmanana}. For the arithmetic mean, we establish
\[
\lambda_{A\nabla B}\geq\lambda_A\nabla\lambda_B
\]
and show that equality holds precisely when $A$ and $B$ are linearly
dependent, generalizing a result of Moln\'ar \cite{molnar2018busch}.  The
geometric mean gives rise to a new phenomenon. We prove the inequality
\[
\lambda_{A\#B}\geq\lambda_A\#\lambda_B
\]
and obtain a complete structural characterization of the equality case. In
contrast to the arithmetic mean, equality may occur even when the two
operators are not linearly dependent. As a further consequence of this
characterization, we determine the corresponding absolutely continuous
components in the Lebesgue decompositions of $A$ with respect to $B$ and of
$B$ with respect to $A$.

The final part of the paper illustrates the scope of the general theory. In
the Hilbert space setting, our results specialize to statements about bounded
positive operators. For nonnegative sesquilinear forms, the construction
recovers the parallel sum and the arithmetic, harmonic, and geometric means of
forms \cite{titkos2014means}, together with the corresponding strength functions. For representable
positive functionals on $*$-algebras (respectively, on $C^*$-algebras), the operator means induce natural
means of functionals. We prove that these means remain representable and
describe them in terms of a common GNS representation. The general results on strength functions, including the geometric-mean equality criterion, then admit corresponding formulations for representable functionals. 

The paper is organized as follows. After recalling the necessary background on anti-dual pairs and positive operators, we introduce connections and the principal operator means, and establish the arithmetic--harmonic iteration for the geometric mean. We then extend the Busch--Gudder strength function to the anti-dual-pair setting and investigate its interaction with these means. In particular, we prove exact identities for the parallel sum and the harmonic mean, characterize equality in the inequalities associated with the arithmetic and geometric means, and derive consequences for Lebesgue decomposition. The concluding section is devoted to applications to positive operators on Hilbert spaces, nonnegative sesquilinear forms, and representable positive functionals.

\section{Preliminaries}

Throughout the paper, all vector spaces are assumed to be complex, and inner
products on Hilbert spaces are taken to be linear in the first variable.

Let $E$ and $F$ be complex vector spaces, and suppose that they are connected
by a sesquilinear form
\[
\dual{\cdot}{\cdot}:F\times E\longrightarrow\dupC
\]
which is linear in its first variable, conjugate-linear in its second variable,
and separating in both variables. Thus,
\[
\dual{f}{x}=0\quad(\forall x\in E)
\]
implies $f=0$, 
and similarly,
\[
\dual{f}{x}=0\quad(\forall f\in F)
\]
implies $x=0$. The pair, together with this anti-duality, will be denoted by
$\dual{F}{E}$ and called an \emph{anti-dual pair}.

The anti-duality induces the weak topologies
\[
\sigef
\qquad\text{and}\qquad
\sigfe
\]
on $E$ and $F$, respectively. Unless stated otherwise, both spaces will always
be equipped with these topologies. Thus, convergence in $F$ with respect to
$\sigfe$ is precisely pointwise convergence on $E$.

The most elementary example is obtained from a Hilbert space $\hil$ by taking
$E=F=\hil$ and using its inner product as the anti-duality. If $X$ is an
arbitrary complex vector space and $\bar X^*$ denotes its conjugate algebraic
dual, then $(X,\bar X^*)$, equipped with the evaluation pairing, is also an
anti-dual pair. Likewise, if $X$ is a locally convex Hausdorff space and
$\anti{X}$ denotes its topological anti-dual, then $(X,\anti{X})$ is an
anti-dual pair with respect to
\[
\dual{f}{x}=f(x),
\qquad x\in X,\quad f\in\anti{X}.
\]

We denote by $\mathscr L(E,F)$ the space of all weakly continuous linear
operators from $E$ into $F$, and by $\mathscr L(E)$ the algebra of weakly
continuous linear operators on $E$. If $T\in\mathscr L(E)$, then its adjoint
$T^*:F\to F$ is determined by
\[
\dual{T^*f}{x}=\dual{f}{Tx},
\qquad f\in F,\quad x\in E.
\]
More generally, if $V:E\to\hil$ is weakly continuous, where $\hil$ is a
Hilbert space, then its adjoint $V^*:\hil\to F$ is defined by
\[
\dual{V^*h}{x}=\sip{h}{Vx},
\qquad h\in\hil,\quad x\in E.
\]
Dually, if $J:\hil\to F$ is weakly continuous, then
$J^*:E\to\hil$ is characterized by
\[
\dual{Jh}{x}=\sip{h}{J^*x},
\qquad h\in\hil,\quad x\in E.
\]

A linear operator $A:E\to F$ is called \emph{positive} if
\[
\dual{Ax}{x}\geq0
\qquad (x\in E).
\]
Every positive operator is automatically weakly continuous and Hermitian (see e.g. \cite{TarcsayMathNach}). in
particular,
\[
\dual{Ax}{y}
=
\overline{\dual{Ay}{x}},
\qquad x,y\in E.
\]
Moreover, its associated sesquilinear form satisfies the Cauchy--Schwarz
inequality
\begin{equation}\label{E:CS-positive}
\abs{\dual{Ax}{y}}^2
\leq
\dual{Ax}{x}\dual{Ay}{y},
\qquad x,y\in E.
\end{equation}
The cone of positive operators from $E$ to $F$ will be denoted by $\lefpoz$.
As usual, the order on this cone is defined by
\[
A\leq B
\quad\Longleftrightarrow\quad
\dual{Ax}{x}\leq\dual{Bx}{x}
\qquad(\forall x\in E).
\]
If $A\in\lefpoz$ and $T\in\mathscr L(E)$, then
$T^*AT\in\lefpoz$, since
\[
\dual{T^*ATx}{x}=\dual{ATx}{Tx}\geq0.
\]

For a net $(A_i)$ in $\lefpoz$, we say that $A_i$ converges pointwise to
$A\in\lefpoz$ if
\[
\dual{A_ix}{y}\longrightarrow\dual{Ax}{y}
\qquad (x,y\in E).
\]
For monotone nets, this convergence is determined by the quadratic forms, by
polarization. In particular, the notation
\[
A_i\downarrow A
\]
means that $A_i\geq A_j\geq A$ whenever $i\leq j$ and
\[
\dual{A_ix}{x}\downarrow\dual{Ax}{x}
\qquad(x\in E).
\]
The notation $A_i\uparrow A$ is understood analogously.

We shall impose a mild completeness assumption on the anti-dual pair. The pair
$\dual{F}{E}$ is called \emph{weak-$*$ sequentially complete} if
$(F,\sigfe)$ is sequentially complete. Equivalently, whenever
$(f_n)$ is a sequence in $F$ such that
$(\dual{f_n}{x})_{n\in\dupN}$
is a Cauchy sequence for every $x\in E$, there exists $f\in F$ satisfying
\[
\dual{f_n}{x}\longrightarrow\dual{f}{x}
\qquad(x\in E).
\]
We remark that, according to the Banach--Steinhaus theorem, the anti-dual space of a Banach space (or more generally of a barrelled space) has this property. Furthermore, it is trivial that the anti-dual pair
$(X,\bar X^*)$ is weak-$*$ sequentially complete for every vector space $X$.

We next recall the canonical Hilbert space factorization of a positive
operator. Let $\dual{F}{E}$ be weak-$^*$ sequentially complete and let
$A\in\lefpoz$. On the range space $\ran A$ of $A$ define
\[
\sipa{Ax}{Ay}
\coloneqq
\dual{Ax}{y},
\qquad x,y\in E.
\]
This is a well-defined inner product, and we denote by $\hila$ the completion
of the resulting pre-Hilbert space. By weak-$^*$ sequentially completeness, the canonical inclusion
\[
J_A:\ran A\subseteq\hila\longrightarrow F,
\qquad
J_A(Ax)=Ax,
\]
extends uniquely to a weakly continuous operator
\[
J_A:\hila\longrightarrow F.
\]
Its adjoint satisfies
\begin{equation}\label{E:JAstar}
J_A^*x=Ax,
\qquad x\in E,
\end{equation}
where the vector $Ax\in\ran A$ is regarded as an element of $\hila$.
Consequently,
\begin{equation}\label{E:A=JAJA}
A=J^{}_AJ_A^*,
\qquad
\overline{\ran J_A^*}=\hila.
\end{equation}
Thus every positive operator factors through a Hilbert space. Equivalently,
with $V=J_A^*$, one has $A=V^*V$.

A related factorization will be used repeatedly. Let $A,C\in\lefpoz$ and
assume that $A\leq C$. Then there exists a unique positive contraction
$\widetilde A\in\balg(\hilc)$ such that
\begin{equation}\label{E:A=JCAtJC}
A=J_C\widetilde A J_C^*.
\end{equation}
It is characterized by
\begin{equation}\label{E:Atilde-characterization}
\sipc{\widetilde A J_C^*x}{J_C^*y}
=
\dual{Ax}{y},
\qquad x,y\in E.
\end{equation}
In particular, if $C=A+B$, then the contractions associated with $A$ and
$B$ satisfy
\begin{equation}\label{E:Atilde+Btilde}
\widetilde A+\widetilde B=I_C,
\end{equation}
where $I_C$ denotes the identity operator on $\hilc$.

The Hilbert spaces $\hila$, the canonical embeddings $J_A$, and the
factorizations \eqref{E:A=JAJA}--\eqref{E:Atilde+Btilde} form the basic
technical framework used throughout the paper. Further details concerning
positive operators on anti-dual pairs and these canonical factorizations can
be found in
\cites{TARCSAY2020Lebesgue,TarcsayMathNach, tarcsay-gode2025}.

\section{Means of positive operators}

In what follows, following the classical work of Kubo and Ando \cite{KuboAndo}, we interpret the notions of operator connections and operator means as binary operations on $\mathscr{L}_+(E,F)$.

A binary operation
\[
\sigma:\lefpoz\times\lefpoz\to\lefpoz
\]
is called a connection if it satisfies the following properties for any positive operators $ 
A,B,C,D\in\lefpoz$ 
and any weakly continuous linear operator
$T\in\mathscr L(E)$:
\begin{enumerate}[label=\textup{(\alph*)}]
    \item $A\le B$ and $C\le D$ imply
    $A\sigma C\le B\sigma D,$
    \item
    $T^*(A\sigma B)T\le (T^*AT)\sigma (T^*BT),$
    \item $  A_n\downarrow A
    $ and $B_n\downarrow B$ 
    imply $  A_n\sigma B_n\downarrow A\sigma B.$
\end{enumerate}

A connection $\sigma$ is called a \textit{mean} if it satisfies
\[
A\sigma A=A
\]
for all $A\in\lefpoz$.

The set of connections (respectively, means) is convex, where convex combinations are defined pointwise in the natural way. Furthermore, there is a natural partial order among connections (respectively, means), namely,
\[
\sigma_1\le\sigma_2
\quad\Longleftrightarrow\quad
A\sigma_1B\le A\sigma_2B
\qquad
(\forall A,B\in\lefpoz).
\]

Moreover, the class of means is closed under pointwise limits of decreasing sequences, as shown in the following proposition.

\begin{proposition}
Let $(\sigma_n)_{n\in\mathbb{N}}$ be a monotonically decreasing sequence of connections. Then the limit
\[
A\sigma B\coloneqq \inf_{n\in\mathbb{N}}A\sigma_n B,
\qquad
A,B\in\lefpoz,
\]
defines a connection on $\lefpoz$. If $\sigma_n$ is a mean for all $n\in\mathbb{N}$, then $\sigma$ is itself a mean.
\end{proposition}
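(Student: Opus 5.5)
First we make sense of the infimum. Fix $A,B\in\lefpoz$ and put $C_n\coloneqq A\sigma_n B$. Since the sequence of connections is decreasing, $(C_n)$ is a decreasing sequence in $\lefpoz$. Hence for each $x\in E$ the numbers $\dual{C_nx}{x}$ decrease and are bounded below by $0$, so they converge.

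By polarization, $\dual{C_nx}{y}$ converges for all $x,y\in E$. In particular, for fixed $x$ the sequence $(C_nx)$ is weak-$*$ Cauchy in $F$. By weak-$*$ sequential completeness there is $Cx\in F$ with $\dual{C_nx}{y}\to\dual{Cx}{y}$ for all $y\in E$. The map $x\mapsto Cx$ is linear and $\dual{Cx}{x}=\lim_n\dual{C_nx}{x}\ge 0$, so $C\in\lefpoz$ and $C_n\downarrow C$.

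Moreover, $C$ is the greatest lower bound of $(C_n)$ in $\lefpoz$. Indeed, $C\le C_n$ for every $n$, and if $D\le C_n$ for all $n$ then $\dual{Dx}{x}\le\dual{Cx}{x}$. So $A\sigma B\coloneqq C$ is well defined. (If the paper works without weak-$*$ completeness at this point, I would instead take the completeness assumption as standing, as it is in the Hilbert space factorization.)

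Next we verify the axioms. For (a): if $A\le B$ and $C\le D$, then $A\sigma_n C\le B\sigma_n D$ for every $n$; letting $n\to\infty$ in the quadratic forms gives $A\sigma C\le B\sigma D$. For (b): $\dual{T^*(A\sigma_n B)Tx}{x}=\dual{(A\sigma_n B)Tx}{Tx}$ converges down to $\dual{(A\sigma B)Tx}{Tx}$. So
\[
T^*(A\sigma B)T=\inf_n T^*(A\sigma_nB)T\le\inf_n (T^*AT)\sigma_n(T^*BT)=(T^*AT)\sigma(T^*BT).
\]
The mean property is immediate: $A\sigma_nA=A$ for all $n$ gives $A\sigma A=A$.

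The main obstacle is (c), which requires interchanging two limits. Let $A_k\downarrow A$ and $B_k\downarrow B$, and fix $x\in E$.
\begin{itemize}
\item By (a), $A_k\sigma B_k$ is decreasing in $k$ and bounded below by $A\sigma B$, so it decreases to some limit $L$ with $L\ge A\sigma B$.
\item For the reverse inequality, for each fixed $n$ and every $k$ we have $A_k\sigma B_k\le A_k\sigma_n B_k$.
\item Letting $k\to\infty$ and using (c) for $\sigma_n$ gives $L\le A\sigma_n B$.
\item Taking the infimum over $n$ gives $L\le A\sigma B$.
\end{itemize}
Hence $L=A\sigma B$, that is, $A_k\sigma B_k\downarrow A\sigma B$. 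This argument is in effect the interchange of two infima, $\inf_k\inf_n=\inf_n\inf_k$, which is legitimate because all the limits are monotone in both indices.
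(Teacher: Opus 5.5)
Your proof is correct and follows the paper's argument: (a) and (b) come from passing to the infimum in the termwise inequalities, (c) is the same interchange $\inf_n\inf_k=\inf_k\inf_n$ of monotone limits (you simply write it out as two inequalities), and the mean property is immediate. The one addition is your check that the infimum actually exists in $\lefpoz$ via weak-$*$ sequential completeness; the paper takes this for granted here, and it is essentially Lemma~\ref{L:1.5}(a), which the paper proves later.
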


\begin{proof}

\textup{(a)}
If $A\leq B$ and $C\leq D$, then
\[
A\sigma C\leq A\sigma_k C\leq B\sigma_k D
\]
for every integer $k$, hence
\[
A\sigma C\le B\sigma D.
\]

\medskip

\textup{(b)}
Let $A,B\in\lefpoz$ and $T\in\mathscr{L}(E)$.
Then
\[
T^*(A\sigma_k B)T
\leq
(T^*AT)\sigma_k (T^*BT)
\]
for every $k$. Thus
\[
\inf_{n\in\mathbb N}
\dual{T^*(A\sigma_n B)Tx}{x}
\leq
\inf_{n\in\mathbb N}
\dual{(T^*AT)\sigma_n (T^*BT)x}{x}
\]
for every $x\in E$. Hence,
\[
T^*(A\sigma B)T
\leq
(T^*AT)\sigma (T^*BT).
\]

\medskip

\textup{(c)}
For monotonically decreasing sequences
\[
A_m\downarrow A,
\qquad
B_m\downarrow B,
\]
we have
\begin{align*}
\dual{(A \sigma B)x}{x}
&=
\inf_{n\in\mathbb N}
\dual{(A\sigma_n B)x}{x} \\
&=
\inf_{n\in\mathbb N}
\left(
\inf_{m\in\mathbb N}
\dual{(A_m\sigma_n B_m)x}{x}
\right) \\
&=
\inf_{m,n\in\mathbb N}
\dual{(A_m\sigma_n B_m)x}{x} \\
&=
\inf_{m\in\mathbb N}
\dual{(A_m\sigma B_m)x}{x}.
\end{align*}

Finally, if $\sigma_n$ is a mean for every $n$, then clearly
\[
A\sigma A
=
\inf_{n\in\mathbb N} A\sigma_n A
=
A.
\]
Thus $\sigma$ is itself a mean.
\end{proof}

The arithmetic mean, together with the left and right trivial means, provides basic examples of operator means on $\lefpoz$. They are defined by
\[
A\nabla B\coloneqq\frac12(A+B),
\qquad
A\omega_l B\coloneqq A,
\qquad
A\omega_r B\coloneqq B.
\]

In what follows, we introduce analogues of the harmonic and geometric means for positive operators, modelled on the Hilbert space theory. In particular, the parallel sum admits several mutually equivalent constructions in this setting as well; see, for example,
\cites{TARCSAY2020Lebesgue, tarcsay2015parallel}.

\subsection{Parallel sum and harmonic mean}

The parallel sum admits several mutually equivalent constructions; see, for instance, 
\cites{TARCSAY2020Lebesgue, tarcsay2015parallel}. For our purposes, the most convenient approach is a representation obtained via factorization through a suitable auxiliary Hilbert space associated with the sum of the operators under consideration.

To this end, let $A,B$ be positive operators on a weak-* sequentially anti-dual pair $\dual{F}{E}$, and let
\[
C\coloneqq A+B
\]
be their sum. Consider the corresponding auxiliary Hilbert space $\hilc$ and the canonical embedding $J_C:\hilc\to F$. Define the densely defined form
\begin{equation*}
    \ran C\times\ran C\to \dupC, 
    \qquad 
    \mathfrak a(Cx,Cy)\coloneqq \dual{Ax}{y},
    \qquad 
    (x,y\in E).
\end{equation*}
It is immediate that $\mathfrak a$ is positive. Moreover,
\begin{equation*}
    \abs{\mathfrak a(Cx,Cy)}^2
    \leq 
    \dual{Ax}{x}\dual{Ay}{y}
    \leq
    \dual{Cx}{x}\dual{Cy}{y}
    =
    \sipc{Cx}{Cx}\sipc{Cy}{Cy},
\end{equation*}
and hence $\mathfrak a$ is continuous. Consequently, there exists a unique positive contraction $\widetilde{A}\in\balg(\hilc)$ such that
\begin{equation*}
    \dual{Ax}{y}
    =
    \sipc{\widetilde A(Cx)}{Cy},
    \qquad x,y\in E.
\end{equation*}
By the same argument, there exists a positive contraction $\widetilde B\in\balg(\hilc)$ such that
\begin{equation*}
    \dual{Bx}{y}
    =
    \sipc{\widetilde B(Cx)}{Cy},
    \qquad x,y\in E.
\end{equation*}
Since $Cx=J_C^*x$, we obtain the useful factorizations
\begin{equation}
    A=J_C\widetilde A J_C^*
    \qquad \mbox{and}\qquad 
    B=J_C\widetilde B J_C^*.
\end{equation}
It is also immediate that
\begin{equation}\label{E:A+B=I}
    \widetilde{A}+\widetilde{B}=I_C,
\end{equation}
where $I_C$ denotes the identity operator on $\hilc$.

With these preparations in place, the parallel sum of $A$ and $B$ can be constructed by the following straightforward procedure.

\begin{theorem}
Let $\dual{E}{F}$ be a weak-* sequentially anti-dual pair and let $A,B\in\lefpoz$ be positive operators. Then the operator
\[
A:B\coloneqq J_C(\widetilde{A}-\widetilde{A}^2)J_C^*
\]
satisfies
\begin{equation}\label{E:parallelquadratic}
    \dual{(A:B)x}{x}
    =
    \inf\set{
    \dual{A(x+y)}{x+y}
    +
    \dual{By}{y}
    }{y\in E}.
\end{equation}
\end{theorem}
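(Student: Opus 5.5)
The plan is to move the whole problem into the auxiliary Hilbert space $\hilc$, where it becomes a quadratic minimization over the dense subspace $\ran C=\ran J_C^*$. Fix $x\in E$ and write $u\coloneqq J_C^*x=Cx$ and $v\coloneqq J_C^*y=Cy$. By the factorizations $A=J_C\widetilde AJ_C^*$, $B=J_C\widetilde BJ_C^*$ and \eqref{E:A+B=I}, the quantity to be minimized is
\[
\dual{A(x+y)}{x+y}+\dual{By}{y}
=\sipc{\widetilde A(u+v)}{u+v}+\sipc{(I_C-\widetilde A)v}{v}.
\]
Expanding and collecting terms, this equals
\[
\sipc{\widetilde Au}{u}+2\operatorname{Re}\sipc{\widetilde Au}{v}+\sipc{v}{v}
=\sipc{\widetilde Au}{u}-\sipc{\widetilde Au}{\widetilde Au}+\|v+\widetilde Au\|_C^2 .
\]
The first two terms give exactly $\sipc{(\widetilde A-\widetilde A^2)u}{u}=\dual{(A:B)x}{x}$, since $\widetilde A$ is selfadjoint.

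The statement therefore reduces to
\[
\inf\set{\|J_C^*y+\widetilde AJ_C^*x\|_C^2}{y\in E}=0 .
\]
Since $\ran J_C^*=\ran C$ is dense in $\hilc$ by \eqref{E:A=JAJA}, we can choose $y_n\in E$ with $J_C^*y_n\to-\widetilde AJ_C^*x$. Then the infimum is at most $\dual{(A:B)x}{x}$. The reverse inequality holds because every term in the infimum is at least $\dual{(A:B)x}{x}$, the remainder being a squared norm.

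I would also check that $A:B$ is a well-defined positive operator. This follows because $0\le\widetilde A\le I_C$ gives $\widetilde A-\widetilde A^2\ge0$, so $A:B=J_C(\widetilde A-\widetilde A^2)J_C^*$ lies in $\lefpoz$.

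The main obstacle is that the infimum is generally not attained, because $\widetilde AJ_C^*x$ need not lie in $\ran J_C^*$. This is why the argument relies on density rather than on an explicit minimizer.
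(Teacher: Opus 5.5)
Your proof is correct and follows essentially the same route as the paper. Both arguments transport the expression to $\hilc$ via $A=J_C\widetilde AJ_C^*$ and $B=J_C(I_C-\widetilde A)J_C^*$, complete the square to isolate $\|J_C^*y+\widetilde AJ_C^*x\|_C^2$, and use the density of $\ran J_C^*$ to show that its infimum over $y$ is $0$; the paper just arranges the computation by first evaluating $\inf_y\{2\operatorname{Re}\dual{Ax}{y}+\dual{(A+B)y}{y}\}=-\|\widetilde AJ_C^*x\|_C^2$ and then adding $\dual{Ax}{x}$.
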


\begin{proof}
Fix $x\in E$. First observe that
\begin{equation*}
    \inf_{y\in E}\|\widetilde A J^*_C x+J^*_C y\|^2_C=0,
\end{equation*}
by the density of $\ran J_C^*$ in $\hilc$. Hence,
\begin{align*}
    -\|\widetilde A J^*_C x\|_C^2
    &=
    \inf\set{
    2\operatorname{Re} \sipc{\widetilde A J^*_C x}{J^*_C y}
    +
    \sipc{J_C^*y}{J_C^*y}
    }{y\in E} \\
    &=
    \inf\set{
    2\operatorname{Re} \dual{Ax}{y}
    +
    \dual{(A+B)y}{y}
    }{y\in E}.
\end{align*}
Therefore,
\begin{align*}
    \dual{J_C(\widetilde{A}-\widetilde{A}^2)J_C^*x}{x}
    &=
    \dual{J_C\widetilde{A}J_C^*x}{x}
    -
    \dual{J_C\widetilde{A}^2J_C^*x}{x} \\
    &=
    \dual{Ax}{x}
    -
    \|\widetilde A J^*_C x\|_C^2 \\
    &=
    \dual{Ax}{x}
    +
    \inf\set{
    2\operatorname{Re} \dual{Ax}{y}
    +
    \dual{(A+B)y}{y}
    }{y\in E} \\
    &=
    \inf\set{
    \dual{A(x+y)}{x+y}
    +
    \dual{By}{y}
    }{y\in E},
\end{align*}
which proves the desired identity.
\end{proof}

In view of identity \eqref{E:parallelquadratic}, we call the positive operator
\[
A:B=J_C(\widetilde{A}-\widetilde{A}^2)J_C^*\in\lefpoz
\]
the \textit{parallel sum} of $A$ and $B$; cf. 
\cites{fillmore1971operator,ando1976lebesgue,tarcsay2015parallel,hassi2009lebesgue}.

\begin{remark}
Note that
\[
\widetilde{A}(I_C-\widetilde{A})
=
\widetilde{B}(I_C-\widetilde{B})
\]
by identity \eqref{E:A+B=I}. Consequently,
\[
A:B=B:A.
\]
\end{remark}

\smallskip

The \textit{harmonic mean} of the operators $A$ and $B$ is now defined, in complete analogy with the Hilbert space case, as twice their parallel sum:
\begin{equation}\label{E:harmonicmean}
A ! B \coloneqq  2\, (A : B).
\end{equation}
It is straightforward to verify that the mapping $(A,B) \mapsto A ! B$ defines an operator mean. Indeed, it suffices to check axiom \textup{(b)}, since the remaining axioms are immediate. Let therefore $T\in\mathscr{L}(E)$ be a weakly continuous operator and let $x\in E$. Then
\begin{align*}
    \dual{(T^*AT:T^*BT)x}{x}
    &=
    \inf_{y\in E}
    \left\{
    \dual{T^*AT(x+y)}{x+y}
    +
    \dual{T^*BT y}{y}
    \right\} \\
    &=
    \inf_{y\in E}
    \left\{
    \dual{A\big(Tx+Ty\big)}{Tx+Ty}
    +
    \dual{BTy}{Ty}
    \right\} \\
    &\ge
    \inf_{z\in E}
    \left\{
    \dual{A\big(Tx+z\big)}{Tx+z}
    +
    \dual{Bz}{z}
    \right\} \\
    &=
    \dual{(A:B)Tx}{Tx} \\
    &=
    \dual{T^*(A:B)T x}{x}.
\end{align*}
Thus
\[
T^*(A:B)T
\leq
(T^*AT):(T^*BT),
\]
and consequently
\[
T^*(A!B)T
\leq
(T^*AT)!(T^*BT).
\]
 
In what follows, we will need the following inequality between the arithmetic and harmonic means.

\begin{lemma} \label{L:1.4}
Let $A,B\in\lefpoz$ be positive operators. Then
\begin{equation*}
    A\nabla B\ge A!B.
\end{equation*}
\end{lemma}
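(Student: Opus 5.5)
We plan to reduce the inequality to an elementary operator inequality on the auxiliary Hilbert space $\hilc$, with $C=A+B$. By the factorizations $A=J_C\widetilde A J_C^*$ and $B=J_C\widetilde B J_C^*$ together with \eqref{E:A+B=I}, we have
\[
A\nabla B=\tfrac12 J_C(\widetilde A+\widetilde B)J_C^*=\tfrac12 J_C J_C^*,
\qquad
A!B=2J_C(\widetilde A-\widetilde A^2)J_C^*.
\]
Since $\dual{J_CSJ_C^*x}{x}=\sipc{SJ_C^*x}{J_C^*x}$ for any $S\in\balg(\hilc)$, it suffices to show the Hilbert space inequality
\[
\tfrac12 I_C\ \ge\ 2(\widetilde A-\widetilde A^2)
\]
in $\balg(\hilc)$. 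Applying both sides to $J_C^*x$ then yields $\dual{(A\nabla B)x}{x}\ge\dual{(A!B)x}{x}$ for every $x\in E$.

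For the Hilbert space inequality, observe that
\[
\tfrac12 I_C-2\widetilde A+2\widetilde A^2=2\bigl(\widetilde A-\tfrac12 I_C\bigr)^2\ge 0,
\]
because $\widetilde A$ is selfadjoint, so the square of the selfadjoint operator $\widetilde A-\tfrac12 I_C$ is positive. Equivalently, $\widetilde A(I_C-\widetilde A)=\widetilde A\widetilde B$ with commuting $\widetilde A,\widetilde B$, and $\tfrac14(\widetilde A+\widetilde B)^2-\widetilde A\widetilde B=\tfrac14(\widetilde A-\widetilde B)^2\ge0$.

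There is no serious obstacle. The only points needing care are that $\widetilde A$ is selfadjoint, which holds because it is a positive contraction, and that the order on $\balg(\hilc)$ transfers to the order on $\lefpoz$ through $J_C^*$. The latter follows directly from the defining identity $\dual{J_Ch}{x}=\sipc{h}{J_C^*x}$.

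As an alternative, one could work from the variational formula \eqref{E:parallelquadratic} with the choice $y=-x/2$. This gives $\dual{(A:B)x}{x}\le\tfrac14\dual{(A+B)x}{x}$, which is again exactly the claim.
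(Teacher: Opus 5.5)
Your proof is correct and is essentially the paper's: the paper also writes $A\nabla B-A!B=\frac12 J_C(I_C-2\widetilde A)^2J_C^*$, which equals your $J_C\cdot 2(\widetilde A-\tfrac12 I_C)^2\cdot J_C^*$, and concludes positivity from the self-adjointness of $I_C-2\widetilde A$. Your alternative, taking $y=-x/2$ in the variational formula \eqref{E:parallelquadratic}, is also valid and avoids the auxiliary Hilbert space altogether.
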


\begin{proof}
An immediate calculation gives
\begin{align*}
    A\nabla B-A!B
    &=
    \frac{1}{2}J_C J_C^*
    -
    2J_C(\widetilde{A}-\widetilde{A}^2)J_C^* \\
    &=
    \frac12 J_C
    \Big(I_C-4\widetilde{A}+4\widetilde{A}^2\Big)
    J_C^* \\
    &=
    \frac12 J_C( I_C-2\widetilde{A})^2J_C^*,
\end{align*}
where $I_C$ denotes the identity operator on the Hilbert space $\hilc$.
Since $I_C-2\widetilde{A}$ is self-adjoint, it follows that
\[
J_C( I_C-2\widetilde{A})^2J_C^*
\]
is a positive operator, which proves the lemma.
\end{proof}

\subsection{Geometric mean}

In this subsection, our aim is to define the geometric mean of positive operators.
Recall that if $\hil$ is a Hilbert space and $A,B$ are invertible positive operators on $\hil$, then their geometric mean is defined by
\[
A \# B \coloneqq
A^{1/2}\bigl(A^{-1/2} B A^{-1/2}\bigr)^{1/2} A^{1/2}.
\]
For non-invertible positive operators $A,B$, the geometric mean can be defined, for instance, by setting
\begin{equation}
    A \# B \coloneqq \lim_{\epsilon\to 0} (A+\epsilon I) \# (B+\epsilon I),
\end{equation}
where the limit exists in the operator norm. If $A$ and $B$ commute, then the same holds for their square roots, and in this case one has
\[
A \# B = (AB)^{1/2}.
\]

Of course, in the general anti-dual pair setting, for a positive operator
$A \in\lefpoz$, neither its square nor its square root is defined. Consequently, the geometric mean cannot be introduced directly by imitating the Hilbert space formula. Observe, however, that for the positive operators $\widetilde{A}$ and
$\widetilde{B} = I_C - \widetilde{A}$ acting on the Hilbert space $\mathcal{H}_C$, the parallel sum satisfies
\[
\widetilde{A} : \widetilde{B}
=
\widetilde{A}(I_C - \widetilde{A}).
\]
Consequently, by the definition of the harmonic mean, we have
\[
A ! B
=
J_C ( \widetilde{A} ! \widetilde{B} ) J_C^*.
\]
Similarly, for the arithmetic mean one also has
\[
A \nabla B
=
J_C (\widetilde{A} \nabla \widetilde{B} ) J_C^*.
\]
In light of these observations, it is natural to introduce the \textit{geometric mean} via factorization through the auxiliary Hilbert space $\mathcal{H}_C$ as follows:
\begin{equation}
A \# B
:=
J_C (\widetilde{A}\#\widetilde{B}) J_C^*
=
J_C \sqrt{\widetilde{A}(I_C - \widetilde{A})} J_C^*.
\end{equation}
One readily checks that $\#$ satisfies axioms \textup{(a)--(c)} of connections. Furthermore, if $A=B$, then
\[
\widetilde{A}=\widetilde{B}=\frac12 I_C,
\]
and hence
\[
A\#A
=
\frac12 J_CJ_C^*
=
\frac12 C
=
A.
\]
Thus $\#$ is a mean on $\lefpoz$.

The above definition is intrinsic in the sense that it is expressed entirely in terms of the canonical Hilbert space associated with the sum $C=A+B$. Nevertheless, it is important to verify that it also retains the characteristic approximation properties of the classical geometric mean. In particular, we next show that it arises as the common limit of the arithmetic--harmonic iteration. Since the relevant convergence in the present setting is pointwise convergence with respect to the duality, we first record the following Vigier-type monotone convergence result.

To analyze iterative constructions of means, we require a version of the Vigier theorem \cite{riesz-sz.nagy} adapted to weak-$*$ sequentially complete anti-dual pairs.

\begin{lemma}\label{L:1.5}
Let $\dual{F}{E}$ be a weak-$*$ sequentially complete anti-dual pair and let $\seq A$ be a sequence of positive operators between $E$ and $F$.
\begin{enuma}
    \item If $A_{n+1}\leq A_n$ for all $n$, then $A_n$ converges pointwise to some positive operator $A\in\lefpoz$, i.e.,
    \begin{equation}\label{E:pointwiseconv}
        \dual{A_nx}{y}\to \dual{Ax}{y}
        \qquad (\forall x,y\in E).
    \end{equation}

    \item If $A_{n+1}\geq A_n$ for all $n$ and there is a positive operator $B\in\lefpoz$ such that $A_n\leq B$, then $A_n$ converges pointwise to some positive operator $A\in\lefpoz$ in the sense of \eqref{E:pointwiseconv}.
\end{enuma}
\end{lemma}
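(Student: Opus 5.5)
The plan is to reduce both parts to scalar monotone convergence of the quadratic forms, recover the off-diagonal values by polarization, and then use weak-$*$ sequential completeness to produce a limit operator.

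For part (a), fix $x\in E$. The sequence $\dual{A_nx}{x}$ is nonincreasing and bounded below by $0$, so it converges. The sesquilinear form $\dual{A_nx}{y}$ is Hermitian (every positive operator is Hermitian, as recalled in the preliminaries), so the polarization identity
\[
\dual{A_nx}{y}=\frac14\sum_{k=0}^{3} i^k\dual{A_n(x+i^ky)}{x+i^ky}
\]
holds. Each of the four terms on the right converges, hence $\dual{A_nx}{y}$ converges for all $x,y\in E$.

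To produce the limit operator, fix $x$ and regard $f_n:=A_nx\in F$. For every $y\in E$, the scalar sequence $\dual{f_n}{y}$ is Cauchy. Weak-$*$ sequential completeness then yields an element $Ax\in F$ with $\dual{A_nx}{y}\to\dual{Ax}{y}$ for all $y$. The element $Ax$ is unique because the pairing separates points of $F$.

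Next we check that $x\mapsto Ax$ is a positive linear operator. Linearity follows from linearity of each $A_n$ together with uniqueness of limits and separation: the identity $\dual{A(\alpha x+\beta z)}{y}=\alpha\dual{Ax}{y}+\beta\dual{Az}{y}$ holds for all $y$, so the two elements of $F$ coincide. Positivity holds because $\dual{Ax}{x}=\lim\dual{A_nx}{x}\ge0$. By the preliminaries, positive operators are automatically weakly continuous, so $A\in\lefpoz$, and \eqref{E:pointwiseconv} holds by construction.

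Part (b) is identical once scalar convergence of the forms is established. The sequence $\dual{A_nx}{x}$ is nondecreasing and bounded above by $\dual{Bx}{x}$, so it converges. Alternatively, one can apply (a) to $B-A_n$, which is a decreasing sequence of positive operators, and set $A:=B-\lim(B-A_n)$. I expect no real obstacle. The only step requiring care is extracting the limit operator: completeness is used for each fixed $x$, and separation of the pairing is what secures well-definedness and linearity.
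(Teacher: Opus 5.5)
Your proof is correct and follows the same route as the paper: monotone convergence of $\dual{A_nx}{x}$, then weak-$*$ sequential completeness applied to $(A_nx)$ for each fixed $x$, then linearity and positivity of the limit together with automatic weak continuity, and finally (b) via the decreasing sequence $B-A_n$. The only difference is how you obtain convergence of $\dual{A_nx}{y}$. You use polarization, which holds for any sesquilinear form, so Hermitian symmetry is not actually needed at that step. The paper instead uses the Cauchy--Schwarz estimate $\abs{\dual{(A_m-A_n)x}{y}}^2\le\dual{(A_m-A_n)x}{x}\,\dual{(A_m-A_n)y}{y}$ for $n\ge m$; both work equally well.
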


\begin{proof}
\textup{(a)}
First observe that for every $x\in E$, the sequence
\[
\alpha_n\coloneqq \dual{A_nx}{x}
\]
is convergent, being non-negative and decreasing. Let now $x,y\in E$ and $n\geq m$. Then $A_m-A_n\geq 0$, so the Cauchy--Schwarz inequality implies
\begin{equation*}
    \abs{\dual{A_mx}{y}-\dual{A_nx}{y}}^2
    \leq
    \dual{(A_m-A_n)x}{x}
    \dual{(A_m-A_n)y}{y},
\end{equation*}
where both factors on the right-hand side tend to $0$ as $n,m\to\infty$. Consequently, $(A_nx)_{n\in\mathbb N}$ is a weak Cauchy sequence in $F$, and therefore it weakly converges to some $Ax\in F$ by weak-$*$ sequential completeness. This means that $A:E\to F$ satisfies \eqref{E:pointwiseconv}. It is clear that $A$ is linear and positive, and hence weakly continuous; that is, $A\in\lefpoz$. This proves statement \textup{(a)}.

\textup{(b)}
This is an immediate application of \textup{(a)} to the decreasing sequence $B-A_n\geq0$.
\end{proof}

We are now in a position to establish the arithmetic--harmonic iteration formula for the geometric mean. This result is the analogue, in the present anti-dual pair setting, of the classical iteration theorem for positive operators; cf. \cite{anderson1979}. It provides an alternative characterization of the above definition and shows that the geometric mean is obtained as the common limit of the decreasing arithmetic and increasing harmonic iterates.

\begin{theorem} \label{iteration}
Let $\dual{F}{E}$ be a weak-* sequentially complete anti-dual pair and let $A,B\in\lefpoz$ be positive operators. Let sequences of operators
\[
(A_n)_{n\in\mathbb{N}},(B_n)_{n\in\mathbb{N}}\subset\lefpoz
\]
be defined by $A_0:=A$, $B_0:=B$ and
\begin{equation}\label{E:AkBk}
    A_{k+1}:=\frac{1}{2}(A_k+B_k),
    \qquad
    B_{k+1}:=A_k!B_k.
\end{equation}
Then
\[
A_n\downarrow A\#B
\qquad\text{and}\qquad
B_n\uparrow A\#B
\]
pointwise on $(E,w(E,F))$.
\end{theorem}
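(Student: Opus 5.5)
The idea is to move the whole iteration into the single Hilbert space $\hilc$, $C=A+B$, and to show that every iterate is a Borel function of the one positive contraction $\widetilde A$. This reduces the statement to scalar arithmetic--harmonic iteration on $[0,1]$.

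\emph{Induction.} We show that for every $k$ there are continuous functions $a_k,b_k\colon[0,1]\to[0,\infty)$ with
\[
A_k=J_C\,a_k(\widetilde A)\,J_C^*,\qquad B_k=J_C\,b_k(\widetilde A)\,J_C^*,
\]
starting from $a_0(t)=t$ and $b_0(t)=1-t$. Assume this holds for $k$. The arithmetic step gives $a_{k+1}=\tfrac12(a_k+b_k)$ at once, by linearity.

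For the harmonic step, we use \eqref{E:parallelquadratic}. For $x\in E$, the quantity $\dual{(A_k:B_k)x}{x}$ equals the infimum over $y\in E$ of
\[
\sipc{a_k(\widetilde A)(J_C^*x+J_C^*y)}{J_C^*x+J_C^*y}+\sipc{b_k(\widetilde A)J_C^*y}{J_C^*y}.
\]
Since $\ran J_C^*$ is dense in $\hilc$ and the expression is norm continuous in $J_C^*y$, this infimum equals the infimum over all $h\in\hilc$. That is precisely the quadratic form of the Hilbert space parallel sum $a_k(\widetilde A):b_k(\widetilde A)$ at $J_C^*x$.

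Because $a_k(\widetilde A)$ and $b_k(\widetilde A)$ commute, functional calculus identifies this parallel sum with $(a_kb_k/(a_k+b_k))(\widetilde A)$, with the convention $0$ where $a_k+b_k=0$. One can check that $a_k+b_k=1$ for every $k$: it holds for $k=0$, and afterwards $a_{k+1}+b_{k+1}=\tfrac12+2a_kb_k$, which is not $1$ in general. So instead we keep track of positivity. Since $a_0b_0=t(1-t)$ and the arithmetic--harmonic step preserves the product ($a_{k+1}b_{k+1}=a_kb_k$), it is cleaner to define
\[
b_{k+1}=\frac{2a_kb_k}{a_k+b_k},
\]
a continuous function wherever $a_k+b_k>0$, and bounded by $\min(a_k,b_k)\cdot 2$. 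The identity $a_kb_k=t(1-t)$ holds for all $k$ by induction. Since the operator identity is established on the dense range, this completes the inductive step.

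\emph{Scalar limit.} For each fixed $t\in[0,1]$, the classical scalar AGH iteration gives
\[
a_k(t)\downarrow\sqrt{a_0b_0}=\sqrt{t(1-t)},\qquad b_k(t)\uparrow\sqrt{t(1-t)},
\]
with $b_k\le b_{k+1}\le a_{k+1}\le a_k$. These inequalities are the scalar forms of Lemma \ref{L:1.4} together with harmonic $\le$ geometric $\le$ arithmetic. Moreover, all functions are uniformly bounded by $1$.

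\emph{Operator limit.} The scalar convergence is monotone, so either Dini's theorem gives uniform convergence on $[0,1]$ (all functions are continuous and the limit is continuous), or monotone convergence in the Borel functional calculus gives strong convergence. In either case
\[
a_k(\widetilde A)\to\sqrt{\widetilde A(I_C-\widetilde A)}
\]
in norm, and the monotone ordering transfers to operators. Sandwiching with $J_C,J_C^*$ then yields $A_n\downarrow A\#B$ and $B_n\uparrow A\#B$ pointwise, by the definition of $\#$. Lemma \ref{L:1.5} is not even needed, although it confirms that the limits exist.

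\emph{Main obstacle.} The step needing the most care is the identification of $A_k:B_k$ with $J_C\,(a_k:b_k)(\widetilde A)\,J_C^*$. The point is that the parallel sum is computed with respect to $\hil_{A_k+B_k}$, not $\hilc$, so one must use the variational formula \eqref{E:parallelquadratic} together with the density argument above rather than the defining formula. One must also justify the Hilbert space variational formula for commuting functions of $\widetilde A$, including the points where $a_k+b_k$ vanishes. I would handle this by the spectral theorem, minimizing pointwise over the spectral measure: the scalar minimum of $a(s+u)^2+bu^2$ is $ab/(a+b)\,s^2$.
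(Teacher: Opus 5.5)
Your argument is correct, but it is organized differently from the paper's. The paper first works abstractly in $\lefpoz$:
\begin{itemize}
\item Lemma~\ref{L:1.4} gives $A_n\ge B_n$, so $(A_n)$ decreases and $(B_n)$ increases from $n=1$ on.
\item The Vigier-type Lemma~\ref{L:1.5}, which uses weak-$*$ sequential completeness, yields the two limits.
\item The estimate $A_{n+1}-B_{n+1}\le\frac12(A_n-B_n)$ shows that the limits coincide.
\item Only then does it pass to $\hilc$. It asserts that the iterates are $J_C\widetilde A_nJ_C^*$ and $J_C\widetilde B_nJ_C^*$ for the Hilbert-space iterates, and identifies the limit through the invariance $\widetilde A_{n+1}\widetilde B_{n+1}=\widetilde A_n\widetilde B_n$.
\end{itemize}

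You instead reduce everything at once to the scalar iteration $(a_k,b_k)$ on $[0,1]$, in the functional calculus of the single contraction $\widetilde A$. Monotonicity, existence and identification of the limit then all follow from the scalar arithmetic--harmonic iteration and Dini's theorem. Lemma~\ref{L:1.5} becomes unnecessary, and the convergence you obtain is uniform on the spectrum, i.e.\ in norm in $\balg(\hilc)$.

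A genuine merit of your route is that it justifies the transfer step the paper dismisses with ``Clearly''. Since $A_k:B_k$ is defined through $\hil_{A_k+B_k}$ and not $\hilc$, one really needs the variational formula \eqref{E:parallelquadratic} together with the density of $\ran J_C^*$, exactly as you do. The paper's route, in turn, shows that monotonicity and the existence of a common limit are purely order-theoretic facts, independent of the concrete description of $\#$.

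When you write it up, make the following fixes:
\begin{itemize}
\item Delete the false start claiming $a_k+b_k=1$ for all $k$; you retract it yourself anyway.
\item State explicitly that $a_k+b_k>0$ on all of $[0,1]$: since $a_1=\frac12$ and $a_{k+1}\ge\frac12a_k$, you get $a_k\ge2^{-k}$. This makes every $b_k$ continuous, so Dini's theorem applies and the convention at zeros of $a_k+b_k$ is never used.
\item Note that the chain $b_k\le b_{k+1}\le a_{k+1}\le a_k$ holds only for $k\ge1$, which is the same implicit reading as in the paper.
\item The Borel alternative gives strong rather than norm convergence, not ``norm in either case''; strong convergence is still sufficient.
\end{itemize}
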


\begin{proof}
As a direct consequence of Lemma~\ref{L:1.4}, we have $A_n \ge B_n$ for all $n \ge 1$.

First, observe that the sequence $(A_n)_{n \in \mathbb{N}}$ is monotonically decreasing, since
\[
    A_n
    =
    \frac{1}{2}(A_n + A_n)
    \ge
    \frac{1}{2}(A_n + B_n)
    =
    A_{n+1}.
\]
Similarly, the sequence $(B_n)_{n \in \mathbb{N}}$ is monotonically increasing, because
\[
    B_n
    =
    2(B_n : B_n)
    \le
    2(A_n : B_n)
    =
    B_{n+1}.
\]

Clearly, $(A_n)$ is bounded below by $0$, and $(B_n)$ is bounded above by $A+B$. Hence, by Lemma~\ref{L:1.5}, both sequences converge pointwise on $(E, w(E,F))$ in the sense of \eqref{E:pointwiseconv}. Next, we show that they have a common limit. Using the monotonicity of $(B_n)$, we compute
\[
    A_{n+1} - B_{n+1}
    =
    \frac{1}{2}(A_n + B_n) - B_{n+1}
    \le
    \frac{1}{2}(A_n + B_n) - B_n
    =
    \frac{1}{2}(A_n - B_n),
\]
which implies
\[
\dual{(A_n - B_n)x}{x} \to 0
\]
for every $x \in E$. Thus $A_n - B_n \to 0$ pointwise in the sense of \eqref{E:pointwiseconv}.

Finally, we show that the common limit is equal to
\[
J_C(\widetilde{A} \# \widetilde{B}) J_C^*.
\]
Define the sequences $(\widetilde{A}_n)$ and $(\widetilde{B}_n)$ in $\balg(\mathcal{H}_C)$ by
\[
\widetilde{A}_0 := \widetilde{A},
\qquad
\widetilde{B}_0 := \widetilde{B} = I_C - \widetilde{A},
\]
and
\[
    \widetilde{A}_{k+1}
    :=
    \widetilde{A}_k \nabla \widetilde{B}_k,
    \qquad
    \widetilde{B}_{k+1}
    :=
    \widetilde{A}_k ! \widetilde{B}_k.
\]
Clearly,
\[
    A_{n+1}
    =
    J_C \widetilde{A}_{n+1} J_C^*
    =
    J_C (\widetilde{A}_n \nabla \widetilde{B}_n) J_C^*,
\]
and
\[
    B_{n+1}
    =
    J_C \widetilde{B}_{n+1} J_C^*
    =
    J_C (\widetilde{A}_n ! \widetilde{B}_n) J_C^*.
\]
By the first part of the proof, $(\widetilde{A}_n)$ and $(\widetilde{B}_n)$ converge pointwise to the same positive operator, say $\widetilde{T} \in \mathcal{B}(\mathcal{H}_C)$. We claim that
\begin{equation}\label{E:Ttilde=AB}
    \widetilde{T}
    =
    \widetilde{A} \# \widetilde{B}
    =
    \widetilde{A}_n \# \widetilde{B}_n
    \quad \text{for every } n.
\end{equation}

Indeed, note that $\widetilde{A}_n$ and $\widetilde{B}_n$ commute for all $n$. Hence
\[
    \widetilde{A}_{n+1} \# \widetilde{B}_{n+1}
    =
    \sqrt{\widetilde{A}_{n+1} \widetilde{B}_{n+1}}
    =
    \sqrt{(\widetilde{A}_n + \widetilde{B}_n)(\widetilde{A}_n : \widetilde{B}_n)}.
\]
Since $\widetilde{A}_1 = \frac{1}{2} I_C$ and $\widetilde{A}_{n+1} \ge \frac{1}{2} \widetilde{A}_n$ for $n \ge 1$, it follows that $\widetilde{A}_n + \widetilde{B}_n$ is invertible for all $n$. Therefore,
\[
    \widetilde{A}_n : \widetilde{B}_n
    =
    \widetilde{A}_n
    (\widetilde{A}_n + \widetilde{B}_n)^{-1}
    \widetilde{B}_n
\]
by the standard formula for the parallel sum in the commuting case; see, for instance, \cite{anderson1979}. Thus
\[
    \widetilde{A}_{n+1} \# \widetilde{B}_{n+1}
    =
    \sqrt{\widetilde{A}_n \widetilde{B}_n}
    =
    \widetilde{A}_n \# \widetilde{B}_n.
\]
Finally, $(\widetilde{A}_n)$ and $(\widetilde{B}_n)$ are uniformly bounded, since
\[
0 \le \widetilde{A}_n, \widetilde{B}_n \le I_C.
\]
Hence
\[
    \widetilde{A} \# \widetilde{B}
    =
    \widetilde{A}_n \# \widetilde{B}_n
    =
    \sqrt{\widetilde{A}_n \widetilde{B}_n}
    \to
    \widetilde{T} \# \widetilde{T}
    =
    \widetilde{T},
\]
which proves \eqref{E:Ttilde=AB}. Therefore,
\[
A \# B = J_C \widetilde{T} J_C^*,
\]
completing the proof.
\end{proof}
\begin{corollary}
    For every pair $A,B$ of positive operators one has 
    \begin{equation*}
        A!B\leq A\#B\leq A\nabla B.
    \end{equation*}
\end{corollary}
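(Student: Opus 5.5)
The corollary follows from the monotonicity built into Theorem~\ref{iteration}, read off at the first step of the iteration. With $A_0=A$ and $B_0=B$, the recursion \eqref{E:AkBk} gives
\[
A_1=\tfrac12(A+B)=A\nabla B,
\qquad
B_1=A!B .
\]
The proof of Theorem~\ref{iteration} shows that $(A_n)_{n\ge1}$ is decreasing and $(B_n)_{n\ge1}$ is increasing, both converging pointwise to $A\#B$. So for every $x\in E$ and every $n\ge1$,
\[
\dual{B_1x}{x}\le\dual{B_nx}{x}\le\dual{(A\#B)x}{x}\le\dual{A_nx}{x}\le\dual{A_1x}{x}.
\]
Taking $n=1$, or simply passing to the limit, yields $A!B\le A\#B\le A\nabla B$ in the order of $\lefpoz$. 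The monotonicity is exactly what the proof established: $A_{n+1}\le A_n$ follows from $B_n\le A_n$, which is Lemma~\ref{L:1.4}, and $B_n\le B_{n+1}$ follows from monotonicity of the parallel sum. The limit in Theorem~\ref{iteration} was identified with $A\#B=J_C(\widetilde A\#\widetilde B)J_C^*$.

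As a cross-check independent of the iteration, we can argue inside $\hilc$. All three means factor as $J_C\,f(\widetilde A)\,J_C^*$ with
\[
f(t)=2t(1-t),\qquad \sqrt{t(1-t)},\qquad \tfrac12
\]
for the harmonic, geometric and arithmetic mean respectively. Since $\widetilde B=I_C-\widetilde A$, this holds for $t\in[0,1]$, the spectrum of the contraction $\widetilde A$.

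It therefore suffices to show that $2t(1-t)\le\sqrt{t(1-t)}\le\frac12$ on $[0,1]$. Put $s=\sqrt{t(1-t)}\in[0,\frac12]$. The inequality then reads $2s^2\le s\le\frac12$, which holds because $2s\le1$.

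By the continuous functional calculus, the differences of these functions evaluated at $\widetilde A$ are positive operators on $\hilc$. Finally, $X\ge0$ implies $J_CXJ_C^*\ge0$, because $\dual{J_CXJ_C^*x}{x}=\sipc{XJ_C^*x}{J_C^*x}$.

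Neither route has a real obstacle. The only point needing care is that the order on $\lefpoz$ is checked through quadratic forms, and pointwise limits of monotone sequences preserve those inequalities; this is immediate from Lemma~\ref{L:1.5}.
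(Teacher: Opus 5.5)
Your main argument is the paper's route: the corollary is stated right after Theorem~\ref{iteration} as a direct consequence of it, read off from $B_1=A!B\le A\#B\le A_1=A\nabla B$. You also correctly restrict the monotonicity to $n\ge1$. Your functional-calculus cross-check is also correct: you reduce to $2s^2\le s\le\frac12$ for $s=\sqrt{t(1-t)}\in[0,\frac12]$ and then conjugate by $J_C$, which gives a self-contained proof that bypasses the iteration, in the spirit of the proof of Lemma~\ref{L:1.4}.
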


\section{Busch--Gudder strength function for positive operators}

The Busch--Gudder strength function of a positive contraction, introduced in \cite{busch1999effects}, is a numerical function associated with a positive operator $A$ and a vector (or state) $h$, which quantifies how strongly the effect ``acts'' in a given direction. It is defined by
\begin{equation}\label{E:lah}
    \lambda(A,h)
    \coloneqq
    \sup\set{\lambda\geq 0}{\lambda P_h \le A},
\end{equation}
where $P_h$ denotes the orthogonal projection onto the one-dimensional subspace spanned by $h$.

The Busch--Gudder strength function can be carried over verbatim to the framework of anti-dual pairs considered in the present paper; cf.~\cite{tarcsaygode2024}. Let $A : E \to F$ be a positive operator. The strength of $A$ along a nonzero state $f \in F$ is defined by
\begin{equation}\label{E:lambdaAf}
\lambda(A,f)
\coloneqq
\sup \{ \lambda \ge 0 : \lambda\, f \otimes f \le A \},
\end{equation}
where $f \otimes f : E \to F$ denotes the rank-one positive operator given by
\[
(f \otimes f)(x) := \overline{\dual{f}{x}}\, f.
\]
Observe that in a Hilbert space, for a unit vector $h$, the operator $h \otimes h$ coincides with the orthogonal projection $P_h$ onto the one-dimensional subspace spanned by $h$. Thus, in this case, the definitions \eqref{E:lah} and \eqref{E:lambdaAf} coincide. Accordingly, in analogy with the Hilbert space case, the scalar-valued mapping $\lambda(A,\cdot)$ will be referred to as the strength function of $A$.

As in the Hilbert space setting, the strength function is closely related to the order structure of positive operators, as well as to the range of the ``square root'' operator $J_A$. According to Theorem~3.1 in \cite{tarcsaygode2024}, for $0\neq f \in F$ we have $\lambda_A(f) > 0$ if and only if $f \in \ran J_A$, and in this case
\begin{equation}\label{E:lambdaAf-formula}
    \lambda_A(f) = \frac{1}{\|J_A^{-1} f\|_A^2}.
\end{equation}
Moreover, the relation $A \leq B$ is equivalent to the condition that
\[
\lambda_A(f) \leq \lambda_B(f)
\]
holds for all $f \in F$. The corresponding Hilbert space version reads as follows; see \cite{busch1999effects} and also \cite{tarcsaygode2024}.

\begin{proposition}
Let $A$ be a positive operator on the Hilbert space $\hil$, and let $h\in\hil$ be a non-zero vector.
\begin{enuma}
    \item $\lambda_A(h)>0$ if and only if $h\in\ran A^{1/2}$.
    \item If $h\in\ran A^{1/2}$, then
    \begin{equation}
        \lambda_A(h)=\frac{1}{\|A^{-1/2}h\|^2}.
    \end{equation}
\end{enuma}
\end{proposition}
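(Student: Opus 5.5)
The plan is to derive this Hilbert space statement from the anti-dual pair result quoted just above (Theorem~3.1 of \cite{tarcsaygode2024}, i.e.\ formula \eqref{E:lambdaAf-formula}). We specialize to $E=F=\hil$ with the inner product as anti-duality; this pair is weak-$*$ sequentially complete. For a unit vector $h$, $h\otimes h=P_h$, so $\lambda_A(h)$ means the same in both settings. For a general nonzero $h$ we have $h\otimes h=\|h\|^2P_{h/\|h\|}$, so both sides of (b) scale consistently. It therefore suffices to identify $\hila$ and $J_A$ concretely in terms of $A^{1/2}$.

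To do this, define $U:\ran A\to\hil$ by $U(Ax)=A^{1/2}x$. The computation
\[
\sipa{Ax}{Ay}=\sip{Ax}{y}=\sip{A^{1/2}x}{A^{1/2}y}
\]
shows that $U$ is a well-defined isometry. It extends to a unitary $U:\hila\to\overline{\ran A^{1/2}}=\overline{\ran A}$. Next check that $J_A=A^{1/2}U$ on $\hila$. On the dense set $\ran A$ we have $A^{1/2}U(Ax)=A^{1/2}A^{1/2}x=Ax=J_A(Ax)$. Both sides are continuous: $A^{1/2}U$ is bounded, and $J_A$ is weakly continuous, hence bounded as a map into $\hil$ by the closed graph theorem. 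So they agree everywhere. Consequently $\ran J_A=A^{1/2}(\overline{\ran A^{1/2}})=\ran A^{1/2}$, since $A^{1/2}$ vanishes on $\ker A^{1/2}=(\ran A^{1/2})^\perp$.

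The injectivity of $J_A$ is implicit in \eqref{E:lambdaAf-formula}; here it also follows from the injectivity of $A^{1/2}$ on $\overline{\ran A^{1/2}}$. Hence for $h\in\ran A^{1/2}$ we get $J_A^{-1}h=U^{-1}A^{-1/2}h$, where $A^{-1/2}h$ is the unique preimage of $h$ lying in $\overline{\ran A^{1/2}}$. Since $U$ is unitary, $\|J_A^{-1}h\|_A=\|A^{-1/2}h\|$. Part (a) now follows from the criterion $\lambda_A(h)>0\iff h\in\ran J_A$, and part (b) from \eqref{E:lambdaAf-formula}.

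The only delicate step is the identification $J_A=A^{1/2}U$, specifically the boundedness and density argument. It also requires being careful that $A^{-1/2}h$ is interpreted as the minimal-norm preimage, i.e.\ the one in $\overline{\ran A^{1/2}}$. If one prefers to avoid invoking \cite{tarcsaygode2024}, the direct alternative is Douglas' lemma. It gives $\lambda P_h\le A$ iff $\sqrt{\lambda}\,h=A^{1/2}k$ with $\|k\|\le1$, and minimizing $\|k\|$ yields $\lambda_A(h)=\|A^{-1/2}h\|^{-2}$ for unit $h$.
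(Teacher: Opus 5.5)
Your proposal is correct. The paper gives no proof of its own here: it cites \cite{busch1999effects} and \cite{tarcsaygode2024} and presents the proposition as the Hilbert space counterpart of \eqref{E:lambdaAf-formula}. Your argument carries out exactly that specialization. The two facts you extract, $\ran J_A=\ran A^{1/2}$ and $\|J_A^{-1}h\|_A=\|A^{-1/2}h\|$ (with $A^{-1/2}h$ the preimage lying in $\overline{\ran A^{1/2}}$), are right.

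The identification $J_A=A^{1/2}U$ needs neither the closed graph theorem nor a density argument. Since $J_A^*y=Ay$, the operator $J_A$ is determined by $\sip{J_A\xi}{y}=\sipa{\xi}{Ay}$, and for all $\xi\in\hila$, $y\in\hil$,
\[
\sip{A^{1/2}U\xi}{y}=\sip{U\xi}{A^{1/2}y}=\sip{U\xi}{U(Ay)}=\sipa{\xi}{Ay},
\]
because $U$ preserves inner products.

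The normalization should be stated explicitly rather than covered by ``both sides scale consistently.'' With $P_h$ as in \eqref{E:lah}, one has $P_{ch}=P_h$, so $\lambda_A(ch)=\lambda_A(h)$. On the other hand, $\|A^{-1/2}(ch)\|^{-2}=\abs{c}^{-2}\|A^{-1/2}h\|^{-2}$. Thus (b) holds for every nonzero $h$ only under the $h\otimes h$ convention of \eqref{E:lambdaAf-formula}, which the paper itself uses in its examples. Under the literal $P_h$ reading it holds only for unit vectors. Part (a) is unaffected either way.

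Your reduction keeps everything inside the paper's framework, but it rests on the external Theorem~3.1 of \cite{tarcsaygode2024}. There is no circularity in this. The general formula can be proved directly by a Riesz-representation argument on $\hila$, of the same kind the paper uses for \eqref{E:lambdaneq0}. Your Douglas-lemma alternative is instead a self-contained Hilbert space proof, and it is also sound.
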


In what follows, we establish a useful lemma that extends formula~(3.2) of \cite{titkosmanana}, originally formulated in the Hilbert space setting, to the more general context of anti-dual pairs.

\begin{lemma}
Let $A \in \lefpoz$ and let $f \in F$, $f \neq 0$. Then
\begin{equation}\label{E:TitkosManana}
\lambda(A,f)
=
\inf_{\dual{f}{x} = 1} \dual{Ax}{x}.
\end{equation}
\end{lemma}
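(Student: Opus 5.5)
Denote the right-hand side of \eqref{E:TitkosManana} by $\mu$; since $f\neq0$ and the pairing is separating, some $x$ has $\dual{f}{x}=1$, so $\mu<\infty$. We prove $\lambda(A,f)\le\mu$ and $\mu\le\lambda(A,f)$ directly from the definition \eqref{E:lambdaAf}, using only the order relation and homogeneity. In particular, we avoid the range characterization \eqref{E:lambdaAf-formula}, which would require weak-$*$ sequential completeness.

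For $\lambda(A,f)\le\mu$, take any $\lambda\ge0$ with $\lambda\, f\otimes f\le A$. Evaluating at $x$ gives
\[
\lambda\abs{\dual{f}{x}}^2=\dual{(f\otimes f)x}{x}\cdot\lambda\le\dual{Ax}{x}.
\]
For $x$ with $\dual{f}{x}=1$ this yields $\lambda\le\dual{Ax}{x}$. Taking the infimum over such $x$ gives $\lambda\le\mu$, and then the supremum over admissible $\lambda$ gives $\lambda(A,f)\le\mu$.

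For $\mu\le\lambda(A,f)$, it suffices to show that $\mu\, f\otimes f\le A$; this makes $\mu$ admissible in \eqref{E:lambdaAf}. (If $\mu=0$ the statement is trivial.) So we must verify
\[
\mu\abs{\dual{f}{x}}^2\le\dual{Ax}{x}\qquad(x\in E).
\]
If $\dual{f}{x}=0$, the left side vanishes and positivity of $A$ gives the claim. Otherwise put $c=\dual{f}{x}$ and $x'=\bar c^{-1}x$. Since the pairing is conjugate-linear in the second variable,
\[
\dual{f}{x'}=\overline{\bar c^{-1}}\,c=c^{-1}c=1.
\]
Hence $\mu\le\dual{Ax'}{x'}=\abs{c}^{-2}\dual{Ax}{x}$, which is the required inequality. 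Note that the infimum $\mu$ need not be attained; this causes no difficulty, because only the bound $\mu\le\dual{Ax'}{x'}$ is used, so the supremum in \eqref{E:lambdaAf} is attained at $\mu$.

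The only step needing care is this normalization, where one must scale by $\bar c^{-1}$ rather than $c^{-1}$ because of the conjugate-linearity convention; beyond that, the argument is elementary.
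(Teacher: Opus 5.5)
Your proof is correct and follows essentially the same argument as the paper's: both inequalities come straight from the definition of $\lambda(A,f)$, the case $\dual{f}{x}=0$ is handled separately, and otherwise $x$ is normalized so that $\dual{f}{x'}=1$. Your scaling $x'=\bar c^{-1}x$ is the correct one under the conjugate-linearity convention. The paper's choice $z=x/\dual{f}{x}$ only yields $\abs{\dual{f}{z}}=1$, which is harmless because a further unimodular rescaling fixes it without changing $\dual{Az}{z}$.
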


\begin{proof}
Let $m_f$ denote the infimum on the right-hand side of \eqref{E:TitkosManana}. We first show that $m_f \le \lambda(A,f)$. Fix $x \in E$. If $\dual{f}{x}=0$, then clearly
\[
0
=
m_f \abs{\dual{f}{x}}^2
=
m_f\dual{(f\otimes f)(x)}{x}
\le
\dual{Ax}{x}.
\]
Otherwise, set
\[
z=\frac{x}{\dual{f}{x}},
\]
so that $\dual{f}{z}=1$. Hence
\[
m_f
\le
\dual{Az}{z}
=
\frac{\dual{Ax}{x}}{\abs{\dual{f}{x}}^2}.
\]
Therefore,
\[
m_f\dual{(f\otimes f)(x)}{x}
\le
\dual{Ax}{x},
\]
and consequently $m_f\leq \lambda(A,f)$.

We now prove the opposite inequality. Let $\lambda > 0$ be such that
\[
\lambda(f \otimes f) \le A,
\]
that is,
\[
\lambda \abs{\dual{f}{x}}^2
\le
\dual{Ax}{x}
\]
for all $x\in E$. Taking any $z \in E$ with $\dual{f}{z}=1$, we obtain
\[
\lambda
=
\lambda \abs{\dual{f}{z}}^2
\le
\dual{Az}{z}.
\]
Taking the infimum over all such $z$ gives $\lambda \le m_f$, and hence $\lambda(A,f)\leq m_f$.
\end{proof}

\begin{proposition}\label{P:mondecreasinglambda}
Let $(A_i)_{i\in \mathscr I}$ be a monotonically decreasing net of positive operators that converges to a positive operator $A \in \lefpoz$ in the sense that
\[
\dual{Ax}{x}
=
\inf_{i\in \mathscr I} \dual{A_i x}{x}
=
\lim_{i\in \mathscr I} \dual{A_i x}{x}.
\]
Then, for every $f \in F$, we have
\[
\lambda_A(f)
=
\inf_{i\in \mathscr I} \lambda_{A_i}(f)
=
\lim_{i\in \mathscr I} \lambda_{A_i}(f).
\]
\end{proposition}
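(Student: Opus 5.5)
The plan is to use the variational formula \eqref{E:TitkosManana} from the preceding lemma and interchange two infima. Monotonicity gives one inequality directly: since $A\le A_i$ for every $i$, any $\lambda$ with $\lambda f\otimes f\le A$ also satisfies $\lambda f\otimes f\le A_i$. Hence $\lambda_A(f)\le\lambda_{A_i}(f)$.

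Similarly, $i\le j$ implies $A_j\le A_i$, and therefore $\lambda_{A_j}(f)\le\lambda_{A_i}(f)$. So $(\lambda_{A_i}(f))_{i}$ is a decreasing net of nonnegative reals bounded below by $\lambda_A(f)$. Its limit exists and equals its infimum, which settles the second equality. It remains to show
\[
\inf_{i\in\mathscr I}\lambda_{A_i}(f)\le \lambda_A(f).
\]

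For this inequality I would apply the lemma to each $A_i$ and to $A$ and exchange the two infima:
\[
\inf_{i}\lambda_{A_i}(f)=\inf_i\inf_{\dual{f}{x}=1}\dual{A_ix}{x}
=\inf_{\dual{f}{x}=1}\inf_i\dual{A_ix}{x}
=\inf_{\dual{f}{x}=1}\dual{Ax}{x}=\lambda_A(f).
\]
Two infima can always be interchanged, so this step is routine. The only input is the hypothesis $\dual{Ax}{x}=\inf_i\dual{A_ix}{x}$ for each $x\in E$.

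Two small points remain. First, the case where no $x$ satisfies $\dual{f}{x}=1$ cannot occur, because $f\neq0$ and the pairing separates points; for $f=0$ both sides are $+\infty$ by convention, so that case is trivial. Second, the lemma must apply to nets and not only sequences, and it does, since its proof uses nothing about the index set. No completeness of the anti-dual pair is needed, because $A$ is assumed to exist. I see no genuine obstacle: the decisive step is the interchange of infima, and it holds for arbitrary index sets.
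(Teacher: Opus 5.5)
Your proof is correct and is essentially the paper's argument. The paper likewise gets $\lambda_A(f)\le\inf_i\lambda_{A_i}(f)=\lim_i\lambda_{A_i}(f)$ from monotonicity, and for the reverse inequality takes any $\lambda\le\inf_i\lambda_{A_i}(f)$, notes $\lambda\abs{\dual{f}{x}}^2\le\dual{A_ix}{x}$ for all $i$ and $x$, and passes to the infimum over $i$; this is your interchange of infima, phrased through the definition of $\lambda_A$ rather than formula \eqref{E:TitkosManana}, and your remark that the lemma ``must apply to nets'' is unnecessary since it is simply applied to each $A_i$ and to $A$ separately.
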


\begin{proof}
Fix $f \in F$. Since for $i \leq j$ we have $A_i \geq A_j \geq A$, it follows that
\[
\lambda_{A_i}(f)
\geq
\lambda_{A_j}(f)
\geq
\lambda_A(f),
\]
and hence
\[
\lambda_A(f)
\leq
\inf_{i\in \mathscr I} \lambda_{A_i}(f)
=
\lim_{i\in \mathscr I} \lambda_{A_i}(f).
\]
Conversely, let
\[
\lambda \leq \inf_{i\in \mathscr I} \lambda_{A_i}(f).
\]
Then for every $x \in E$ and every $i \in \mathscr I$ we have
\[
\lambda \, |\dual{f}{x}|^2
\leq
\dual{A_i x}{x}.
\]
Thus, by the assumption,
\[
\lambda \, |\dual{f}{x}|^2
\leq
\dual{A x}{x},
\]
that is, $\lambda \leq \lambda_A(f)$.
\end{proof}

In what follows, we investigate the relationship between certain operator means and the strength function. More precisely, given an operator mean $\#$ and positive operators $A,B$, we study under what conditions the identity
\[
\lambda(A \# B,\cdot)
=
\lambda(A,\cdot) \# \lambda(B,\cdot)
\]
holds, where in the latter expression the mean $\#$ is understood pointwise.

To this end, the following auxiliary result will be useful.

\begin{lemma}\label{L:2.2}
Let $A,C\in\lefpoz$ be positive operators such that $A\leq C$. Let $\widetilde A\in\balg(\hilc)$ denote the positive contraction in the energy Hilbert space $\hilc$ such that
\[
A=J_C\widetilde{A}J^*_C.
\]
Then
\begin{equation}\label{E:lambdaJCh}
    \lambda_A(J_Ch)=\lambda_{\widetilde{A}}(h)
    \qquad (h\in\hilc).
\end{equation}
\end{lemma}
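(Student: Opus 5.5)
We compare the two infimum formulas given by \eqref{E:TitkosManana}. On the anti-dual pair side, for $h\in\hilc$ with $J_Ch\neq0$,
\[
\lambda_A(J_Ch)=\inf\set{\dual{Ax}{x}}{x\in E,\ \dual{J_Ch}{x}=1}.
\]
Here $\dual{J_Ch}{x}=\sipc{h}{J_C^*x}$ and, by \eqref{E:Atilde-characterization}, $\dual{Ax}{x}=\sipc{\widetilde AJ_C^*x}{J_C^*x}$. On the Hilbert space side, $\hilc$ is itself an anti-dual pair with its inner product, and $\widetilde A$ is positive on it. So \eqref{E:TitkosManana} applied to $\widetilde A$ gives
\[
\lambda_{\widetilde A}(h)=\inf\set{\sipc{\widetilde Ak}{k}}{k\in\hilc,\ \sipc{h}{k}=1}.
\]
The first infimum is therefore the second one restricted to $k$ in the dense subspace $\ran J_C^*$. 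Two cases remain.

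\emph{Degenerate cases.} If $h=0$, both sides are to be read with the same convention. If $J_Ch=0$ but $h\neq0$, then $h\perp\ran J_C^*$, and since $\ran J_C^*$ is dense in $\hilc$ by \eqref{E:A=JAJA}, we get $h=0$. So $J_C$ is injective and this case does not occur. Hence we may assume $h\neq0$ and $J_Ch\neq0$.

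\emph{Main case.} The inequality $\lambda_{\widetilde A}(h)\le\lambda_A(J_Ch)$ holds because the first infimum is taken over fewer vectors. For the reverse inequality, fix $k\in\hilc$ with $\sipc{h}{k}=1$ and $\epsilon>0$. By density choose $x_n\in E$ with $J_C^*x_n\to k$ in norm. Then $\alpha_n:=\sipc{h}{J_C^*x_n}\to1$, so $\alpha_n\neq0$ for large $n$. Put $x_n'=x_n/\bar\alpha_n$, so that $\dual{J_Ch}{x_n'}=\sipc{h}{J_C^*x_n}/\alpha_n=1$. Since $J_C^*x_n'\to k$ and $\widetilde A$ is bounded, we get
\[
\dual{Ax_n'}{x_n'}=\sipc{\widetilde AJ_C^*x_n'}{J_C^*x_n'}\to\sipc{\widetilde Ak}{k}.
\]
Thus $\lambda_A(J_Ch)\le\sipc{\widetilde Ak}{k}$ for every admissible $k$, which gives $\lambda_A(J_Ch)\le\lambda_{\widetilde A}(h)$.

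The only delicate point is the renormalization step. One must approximate $k$ while keeping the constraint exactly equal to $1$, and take the conjugate in the scaling consistently with the conjugate-linearity of the pairing in its second variable. Continuity of the quadratic form of the bounded operator $\widetilde A$ then finishes the argument.
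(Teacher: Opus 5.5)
Your proof is correct and uses the same key fact as the paper's proof: $\ran J_C^*=\ran C$ is dense in $\hilc$, combined with the identities $\dual{J_Ch}{x}=\sipc{h}{J_C^*x}$ and $\dual{Ax}{x}=\sipc{\widetilde AJ_C^*x}{J_C^*x}$. The paper works directly with the supremum definition, where the inequality $\lambda\abs{\sipc{h}{k}}^2\le\sipc{\widetilde Ak}{k}$ is continuous in $k$ and so passes from $\ran J_C^*$ to $\hilc$ at once; this avoids your renormalization step and covers $h=0$ (both sides $+\infty$) without a separate convention.
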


\begin{proof}
Let $h\in\hilc$ be fixed. Then
\begin{align*}
    \lambda_A(J_Ch)
    &=
    \sup\set{\lambda\ge 0}
    { \lambda\abs{\dual{J_Ch}{x}}^2\le\dual{Ax}{x}\quad (\forall x\in E)}
    \\
    &=
    \sup\set{\lambda\ge 0}
    { \lambda\abs{\sipc{h}{J_C^*x}}^2
    \le
    \sipc{\widetilde A J_C^*x}{J_C^*x}
    \quad (\forall x\in E)}.
\end{align*}
By definition, $\ran J_C^*=\ran C$ is dense in $\hilc$. Therefore, the latter supremum is equal to
\begin{equation*}
   \sup\set{\lambda\ge 0}
   { \lambda\abs{\sipc{h}{k}}^2
   \le
   \sipc{\widetilde Ak}{k}
   \quad (\forall k\in \hilc)}
   =
   \lambda_{\widetilde{A}}(h),
\end{equation*}
which proves \eqref{E:lambdaJCh}.
\end{proof}

We first examine the relationship between the strength function and the harmonic mean. To this end, we need the following lemma, which generalizes a Hilbert space result from \cite{titkosmanana}.

\begin{lemma}
Let $A,B\in\lefpoz$ be positive operators. Then
\begin{equation}
    \lambda_{A:B}(f)
    =
    (\lambda_A:\lambda_B)(f)
    \coloneqq
    \frac{\lambda_A(f)\lambda_B(f)}
         {\lambda_A(f)+\lambda_B(f)},
    \qquad f\in F,
\end{equation}
with the convention that $\frac00\coloneqq 0$ on the right-hand side.
\end{lemma}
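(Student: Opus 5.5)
Fix $0\neq f\in F$. I would work with the variational formula \eqref{E:TitkosManana} for the strength function together with the infimum formula \eqref{E:parallelquadratic} for the parallel sum. Combining them gives
\[
\lambda_{A:B}(f)=\inf_{\dual{f}{x}=1}\ \inf_{y\in E}\bigl\{\dual{A(x+y)}{x+y}+\dual{By}{y}\bigr\}.
\]
Substituting $u=x+y$ and $v=-y$ turns this into an infimum over all pairs $(u,v)\in E\times E$ with $u+v=x$, where $\dual{f}{x}=1$. Writing $\dual{f}{u}=t$ forces $\dual{f}{v}=1-t$, so
\[
\lambda_{A:B}(f)=\inf_{t\in\dupC}\Bigl(\inf_{\dual{f}{u}=t}\dual{Au}{u}+\inf_{\dual{f}{v}=1-t}\dual{Bv}{v}\Bigr).
\]
The two inner infima decouple because $u$ and $v$ can be chosen independently once $t$ is fixed. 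To justify this, I would check that every pair $(u,v)$ with $\dual{f}{u}=t$ and $\dual{f}{v}=1-t$ arises from some admissible $(x,y)$, namely $x=u+v$ and $y=-v$.

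Next I would evaluate the inner infima by homogeneity. For $t\neq0$, scaling $u=tz$ with $\dual{f}{z}=1$ gives $\inf_{\dual{f}{u}=t}\dual{Au}{u}=\abs{t}^2\lambda_A(f)$. For $t=0$ the infimum is $0$, attained at $u=0$, which agrees with the same formula. Hence
\[
\lambda_{A:B}(f)=\inf_{t\in\dupC}\bigl(\abs{t}^2a+\abs{1-t}^2b\bigr),\qquad a=\lambda_A(f),\ b=\lambda_B(f),
\]
with $a,b\in[0,\infty)$. Finiteness holds because $f\neq0$ admits some $z$ with $\dual{f}{z}=1$, so $a\le\dual{Az}{z}<\infty$.

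It remains to minimize a scalar quadratic. Replacing $t$ by $\operatorname{Re}t$ does not increase either term, so I may assume $t\in\dupR$. If $a+b>0$, the minimum is attained at $t=b/(a+b)$ and equals $ab/(a+b)$. If $a=b=0$, the value at any $t$ is $0$, matching the convention $\frac00=0$. The edge cases where exactly one of $a,b$ vanishes give $0$ via $t=0$ or $t=1$, which is consistent with the formula.

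The main obstacle is the decoupling step: exchanging the order of the infima and the change of variables must be done carefully so that the constraint $\dual{f}{x}=1$ is transported exactly to the pair constraint $\dual{f}{u}+\dual{f}{v}=1$. The rest is routine.
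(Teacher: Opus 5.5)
Your proof is correct, but it takes a different route from the paper's.

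\textbf{The paper's route.} The paper transports the problem to the auxiliary Hilbert space $\hilc$. It first disposes of $f\notin\ran J_C$ using the range characterization \eqref{E:lambdaAf-formula}. It then writes $f=J_Ch$ and uses Lemma~\ref{L:2.2} to identify $\lambda_A(f)$, $\lambda_B(f)$, $\lambda_{A:B}(f)$ with the strengths of $\widetilde A$, $\widetilde B$ and $\widetilde A:\widetilde B=\widetilde A(I_C-\widetilde A)$ at $h$. Finally it quotes the Key Lemma of \cite{titkosmanana} for Hilbert space operators.

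\textbf{Your route.} You work entirely inside the anti-dual pair. You combine the two variational formulas \eqref{E:TitkosManana} and \eqref{E:parallelquadratic}, observe that the substitution $u=x+y$, $v=-y$ is a bijection onto pairs with $\dual{f}{u}+\dual{f}{v}=1$, and reduce everything to the scalar minimization
\[
\inf_{t\in\dupC}\bigl(\abs{t}^2a+\abs{1-t}^2b\bigr)=\frac{ab}{a+b}.
\]

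\textbf{What each approach buys.} Your argument is self-contained:
\begin{enumerate}
\item it needs neither the external Key Lemma nor the description of $\lambda_A$ through $\ran J_A$;
\item the degenerate cases $a=0$ or $b=0$ come out of the same computation, with no separate range argument;
\item it uses nothing about $A:B$ beyond its quadratic-form infimum, so in particular it reproves the Hilbert-space Key Lemma rather than citing it.
\end{enumerate}
The paper's approach is shorter given its machinery. More importantly, the transport to $\hilc$ is the uniform mechanism the paper needs anyway for the geometric mean, where no comparable variational formula is available.

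\textbf{One small slip.} The pairing is conjugate-linear in the second variable, so the rescaling that maps $\{z:\dual{f}{z}=1\}$ bijectively onto $\{u:\dual{f}{u}=t\}$ is $u=\bar t z$, not $u=tz$. This does not change the value $\abs{t}^2\lambda_A(f)$.
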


\begin{proof}
As before, let $C := A + B$. First observe that
\[
\ran J_A \cup \ran J_B \subseteq \ran J_C.
\]
Therefore, by formula \eqref{E:lambdaAf-formula}, if $f \notin \ran J_C$, then both sides of the formula to be proved are equal to $0$. Hence, we may assume that $f = J_C h$ for some $h \in \hilc$. By Lemma~\ref{L:2.2}, we have
\[
\lambda_A(f) = \lambda_{\widetilde A}(h)
\qquad \text{and} \qquad
\lambda_B(f) = \lambda_{\widetilde B}(h).
\]
By the Key Lemma in \cite{titkosmanana}, one has
\begin{equation*}
    \lambda_{\widetilde{A}:\widetilde{B}}(h)
    =
    \frac{\lambda_{\widetilde{A}}(h)\lambda_{\widetilde{B}}(h)}
         {\lambda_{\widetilde{A}}(h)+\lambda_{\widetilde{B}}(h)}
    =
    \frac{\lambda_A(f)\lambda_B(f)}
         {\lambda_A(f)+\lambda_B(f)}.
\end{equation*}
Using Lemma~\ref{L:2.2} once more, we obtain
\[
\lambda_{A:B}(f)
=
\lambda_{\widetilde{A}:\widetilde{B}}(h),
\]
which completes the proof.
\end{proof}

Since, by definition, the harmonic mean is twice the parallel sum, we immediately obtain the following result.

\begin{theorem}\label{T:lambdaA!B}
For any positive operators $A,B\in\lefpoz$, we have
\[
\lambda_{A!B} = \lambda_A ! \lambda_B.
\]
\end{theorem}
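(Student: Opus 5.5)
The plan is to reduce the statement directly to the preceding lemma on the parallel sum, using a homogeneity property of the strength function. The definition \eqref{E:harmonicmean} gives $A!B=2(A:B)$, and the pointwise harmonic mean of scalar functions is $\lambda_A!\lambda_B\coloneqq 2(\lambda_A:\lambda_B)$, with the same convention $\frac00\coloneqq0$. So it suffices to show $\lambda_{2(A:B)}=2\lambda_{A:B}$ and then quote the lemma.

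First I will record the elementary scaling identity
\[
\lambda_{tD}(f)=t\,\lambda_D(f),\qquad D\in\lefpoz,\ t>0,\ 0\neq f\in F .
\]
This follows immediately from \eqref{E:TitkosManana}: $\inf_{\dual{f}{x}=1}\dual{tDx}{x}=t\inf_{\dual{f}{x}=1}\dual{Dx}{x}$. Alternatively, use the definition \eqref{E:lambdaAf} directly, since $\lambda f\otimes f\le tD$ holds if and only if $(\lambda/t)f\otimes f\le D$. For $f=0$ both sides are interpreted in the same way (for instance both equal $0$ under the paper's convention, or $f$ is simply excluded), so nothing needs to be checked there.

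Next, applying the scaling identity with $D=A:B$ and $t=2$, and then the preceding lemma, I will obtain, for every $f$,
\[
\lambda_{A!B}(f)=2\lambda_{A:B}(f)=2\,\frac{\lambda_A(f)\lambda_B(f)}{\lambda_A(f)+\lambda_B(f)}=(\lambda_A!\lambda_B)(f).
\]
The degenerate case $\lambda_A(f)+\lambda_B(f)=0$ needs no separate treatment, because the convention $\frac00=0$ is preserved under multiplication by $2$.

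There is no real obstacle here. The only point needing care is to state explicitly the scalar harmonic mean convention $a!b=\frac{2ab}{a+b}$ with $\frac00=0$, so that it matches the lemma's convention for $\lambda_A:\lambda_B$.
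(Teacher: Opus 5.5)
Your proposal is correct and matches the paper's argument: the paper also deduces the theorem directly from $A!B=2(A:B)$ and the preceding lemma $\lambda_{A:B}=\lambda_A:\lambda_B$. The one step the paper leaves implicit is the homogeneity $\lambda_{tD}=t\lambda_D$, and you state and verify it explicitly.
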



In contrast to the harmonic mean, the arithmetic mean does not interact as well with the strength function; that is, the identity
\[
\lambda_{A\nabla B} = \lambda_A \nabla \lambda_B
\]
holds only in a very special case. Indeed, we have the following result.

\begin{theorem} \label{T:ineqarit}
Let $A,B \in \lefpoz$ be positive operators. Then
\begin{equation}\label{E:strength-nabla}
\lambda_{A\nabla B}(f)
\ge
(\lambda_A \nabla \lambda_B)(f)
\coloneqq
\frac{\lambda_A(f)+\lambda_B(f)}{2}
\end{equation}
for all $f\in F$. The two sides are equal for all $f\in F$ if and only if $A$ and $B$ are linearly dependent.
\end{theorem}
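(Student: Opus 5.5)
The inequality is immediate from the variational formula \eqref{E:TitkosManana}. For $\dual{f}{x}=1$ we have
\[
\dual{(A\nabla B)x}{x}=\tfrac12\dual{Ax}{x}+\tfrac12\dual{Bx}{x}\ge \tfrac12\lambda_A(f)+\tfrac12\lambda_B(f).
\]
Taking the infimum over all such $x$ gives \eqref{E:strength-nabla}, because an infimum of a sum dominates the sum of the infima. The same computation also proves the trivial direction of the equivalence. If $B=cA$ (or $A=0$), then $\lambda_{cA}=c\lambda_A$ directly from the definition \eqref{E:lambdaAf}, so both sides of \eqref{E:strength-nabla} equal $\frac{1+c}{2}\lambda_A(f)$.

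For the converse, assume equality holds for every $f$ and transfer the problem to $\hilc$, where $C=A+B$. Since $A\nabla B=\frac12 C$, the left side is $\frac12\lambda_C(f)$. For $f\notin\ran J_C$ both sides vanish, because $\ran J_A\cup\ran J_B\subseteq\ran J_C$. For $f=J_Ch$, Lemma~\ref{L:2.2} turns the hypothesis into
\[
\lambda_{\widetilde A}(h)+\lambda_{I_C-\widetilde A}(h)=\lambda_{I_C}(h)=\|h\|_C^{-2}\qquad(0\ne h\in\hilc).
\]
So the claim reduces to a Hilbert space statement. If $0\le T\le I$ on a Hilbert space satisfies $\lambda_T(h)+\lambda_{I-T}(h)=\|h\|^{-2}$ for all $h\ne0$, then $T=tI$ for a scalar $t$. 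Once this is known, $A=J_C\widetilde AJ_C^*=tC$ and $B=(1-t)C$, which are linearly dependent.

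To prove the Hilbert space claim, the plan is to exploit unit vectors. For $\|h\|=1$ write $a=\lambda_T(h)$ and $b=\lambda_{I-T}(h)$. By \eqref{E:TitkosManana} in $\hilc$, taking $x=h$ gives
\[
a\le (Th|h),\qquad b\le 1-(Th|h).
\]
Equality $a+b=1$ therefore forces $\lambda_T(h)=(Th|h)$ for every unit vector $h$. In other words, the infimum of $(Tx|x)$ over $\{x:(x|h)=1\}$ is attained at $x=h$. At a minimum point the first variation vanishes, so $(Th|k)=0$ for every $k\perp h$. This means every unit vector $h$ is an eigenvector of $T$. An operator for which every vector is an eigenvector is scalar: if $h_1,h_2$ had distinct eigenvalues, then $h_1+h_2$ would fail to be an eigenvector. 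Hence $T=tI$.

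The main obstacle is justifying the variational step, namely that attainment of the infimum at $h$ yields $Th\parallel h$. The plan is to expand $(T(h+\epsilon k)|h+\epsilon k)$ for $k\perp h$ and small complex $\epsilon$. Minimality forces $\operatorname{Re}(\bar\epsilon(Th|k))\ge -O(\epsilon^2)$ for all such $\epsilon$, and choosing the phase of $\epsilon$ gives $(Th|k)=0$. The degenerate cases also need checking. If $(Th|h)=0$, then $Th=0$ by the Cauchy--Schwarz inequality, so $h$ is again an eigenvector. One should also confirm that $\lambda_T$ computed via \eqref{E:TitkosManana} on $\hilc$ agrees with Lemma~\ref{L:2.2}, which holds because $\lambda_{\widetilde A}$ is defined directly on $\hilc$.
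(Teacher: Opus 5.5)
Your proof is correct. It follows the paper's reduction to $\hilc$ via Lemma~\ref{L:2.2}: $\widetilde C=I_C$, $\lambda_{I_C}(h)=\|h\|_C^{-2}$, and $J_Ch\neq0$ for $h\neq0$ because $\ran J_C^*$ is dense. The difference is in how you settle the resulting Hilbert space statement.

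The paper cites Moln\'ar's Proposition~2 in \cite{molnar2018busch}: $\lambda_S+\lambda_T=\lambda_{S+T}$ forces \emph{arbitrary} positive $S,T$ to be linearly dependent. You instead use the normalization $\widetilde A+\widetilde B=I_C$, which makes $\lambda_{\widetilde A+\widetilde B}$ explicit. Testing the variational formula at $x=h$ then forces $\lambda_{\widetilde A}(h)=\sipc{\widetilde Ah}{h}$ for unit $h$. So $h$ minimizes the quadratic form of $\widetilde A$ on the affine hyperplane $h+\{h\}^\perp$. The first variation then gives $\widetilde Ah\in\dupC h$, and an operator for which every vector is an eigenvector is scalar.

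What your route buys:
\begin{itemize}
\item It is self-contained and elementary.
\item It is local: the same computation shows that equality at $f=J_Ch$ holds exactly when $h$ is an eigenvector of $\widetilde A$.
\end{itemize}
It does not recover the general, unnormalized Moln\'ar statement. That is not needed here, since the normalization through $C=A+B$ is always available.

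Smaller differences:
\begin{itemize}
\item You derive the inequality from the variational formula. The paper argues directly from the definition by adding admissible $\lambda\le\lambda_A(f)$ and $\mu\le\lambda_B(f)$.
\item You check the easy direction (linear dependence implies equality), which the paper leaves implicit.
\item Your remark about $f\notin\ran J_C$ and your separate treatment of $\sipc{\widetilde Ah}{h}=0$ are harmless but unnecessary. The first-variation argument already covers every $h$.
\end{itemize}
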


\begin{proof}
Fix a nonzero vector $f\in F$ and consider two nonnegative numbers $\lambda,\mu$ such that
\[
\lambda\leq \lambda_A(f),
\qquad
\mu\leq \lambda_B(f).
\]
Then
\begin{equation*}
    (\lambda+\mu)f\otimes f
    =
    \lambda f\otimes f+\mu f\otimes f
    \leq A+B,
\end{equation*}
and hence
\[
\lambda +\mu\leq \lambda_{A+B} (f).
\]
Consequently,
\[
\lambda_A(f)+\lambda_B(f)\leq \lambda_{A+B} (f),
\]
which yields \eqref{E:strength-nabla}.

Now assume that equality holds in \eqref{E:strength-nabla} for all $f$. It is easy to see that, in this case, the operators $\widetilde A,\widetilde B\in\balg(\hilc)$ also satisfy
\[
\lambda_{\widetilde A}(h) + \lambda_{\widetilde B}(h)
=
\lambda_{\widetilde A + \widetilde B}(h)
\]
for all $h \in \hilc$. According to Proposition~2 in \cite{molnar2018busch}, this is possible only if $\widetilde A$ and $\widetilde B$ are linearly dependent. It then follows from the identities
\[
A = J_C \widetilde A J_C^*,
\qquad
B = J_C \widetilde B J_C^*
\]
that $A$ and $B$ are also linearly dependent.
\end{proof}

After this, we turn to the main result of this section, concerning the relationship between the strength function and the geometric mean. We first show that the inequality
\[
\lambda_{A\# B} \ge \lambda_A \# \lambda_B
\]
always holds.

\begin{proposition}\label{P:2.7}
Let $A,B\in\lefpoz$ be positive operators. Then
\begin{equation}
\lambda_{A\# B} \ge \lambda_A \# \lambda_B.
\end{equation}
\end{proposition}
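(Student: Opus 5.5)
We prove the inequality pointwise: fix $f\in F$ and show
\[
\lambda_{A\#B}(f)\ge\sqrt{\lambda_A(f)\lambda_B(f)},
\]
where $\lambda_A\#\lambda_B$ is the pointwise geometric mean of nonnegative numbers. The natural route is the arithmetic--harmonic iteration of Theorem~\ref{iteration}, combined with the exact behaviour of the strength function under the harmonic mean (Theorem~\ref{T:lambdaA!B}) and the superadditivity under the arithmetic mean (Theorem~\ref{T:ineqarit}).

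\emph{Reduction.} If $\lambda_A(f)=0$ or $\lambda_B(f)=0$, there is nothing to prove. So assume $a_0:=\lambda_A(f)>0$ and $b_0:=\lambda_B(f)>0$. Let $(A_n),(B_n)$ be the iterates \eqref{E:AkBk}, and let $(a_n),(b_n)$ be the classical arithmetic--harmonic iteration of scalars:
\[
a_{n+1}=\tfrac12(a_n+b_n),\qquad b_{n+1}=\frac{2a_nb_n}{a_n+b_n}.
\]
Both scalar sequences converge to $\sqrt{a_0b_0}$, since $a_{n+1}b_{n+1}=a_nb_n$, $(a_n)$ decreases, $(b_n)$ increases, and the gap $a_n-b_n$ at least halves at each step.

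\emph{Key step: by induction, $\lambda_{A_n}(f)\ge a_n$ and $\lambda_{B_n}(f)\ge b_n$ for all $n$.} The case $n=0$ holds with equality. For the inductive step:
\begin{itemize}
\item Theorem~\ref{T:ineqarit} gives $\lambda_{A_{n+1}}(f)\ge \tfrac12\bigl(\lambda_{A_n}(f)+\lambda_{B_n}(f)\bigr)\ge a_{n+1}$.
\item Theorem~\ref{T:lambdaA!B} gives $\lambda_{B_{n+1}}(f)=\lambda_{A_n}(f)\,!\,\lambda_{B_n}(f)$. The scalar harmonic mean $(s,t)\mapsto 2st/(s+t)$ is monotone increasing in each variable on $[0,\infty)^2$, including the convention $0/0=0$. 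Hence $\lambda_{B_{n+1}}(f)\ge a_n!b_n=b_{n+1}$.
\end{itemize}
This monotonicity of the scalar harmonic mean is the only point needing a brief check, but it is elementary.

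\emph{Passage to the limit.} By Theorem~\ref{iteration}, $B_n\uparrow A\#B$. In particular $B_n\le A\#B$, and since the strength function is monotone with respect to the order (the relation $A\le B$ implies $\lambda_A\le\lambda_B$), we get
\[
\lambda_{A\#B}(f)\ge\lambda_{B_n}(f)\ge b_n\qquad\text{for every } n.
\]
Letting $n\to\infty$ yields $\lambda_{A\#B}(f)\ge\sqrt{a_0b_0}$, as required.

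Alternatively, one could use $A_n\downarrow A\#B$ together with Proposition~\ref{P:mondecreasinglambda}, which gives $\lambda_{A\#B}(f)=\lim_n\lambda_{A_n}(f)\ge\lim_n a_n$. Either route works, and the increasing harmonic side avoids any continuity argument.

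I expect no serious obstacle. The only care needed is with the degenerate values $0$, and these are handled by the initial reduction together with the $0/0=0$ convention in Theorem~\ref{T:lambdaA!B}.
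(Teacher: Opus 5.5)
Your argument is correct, but it takes a genuinely different route from the paper.

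\textbf{The paper's route.} The paper works directly in $\hilc$. For invertible commuting $\widetilde A,\widetilde B$ it writes $\lambda_{\widetilde A\#\widetilde B}(h)=1/\sip{\widetilde A^{-1/2}h}{\widetilde B^{-1/2}h}$ and applies the Cauchy--Schwarz inequality. It removes the invertibility assumption by the regularization $\widetilde A+\frac1n I$, $\widetilde B+\frac1n I$ together with Proposition~\ref{P:mondecreasinglambda}. Finally it transfers the estimate to $F$ via Lemma~\ref{L:2.2}, the case $f\notin\ran J_C$ being trivial.

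\textbf{Your route.} You avoid square roots, inverses and the auxiliary space entirely. You push the scalar lower bounds $a_n,b_n$ through the arithmetic--harmonic iteration using Theorems~\ref{T:ineqarit} and~\ref{T:lambdaA!B}, and conclude by order-monotonicity of $\lambda$.

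\textbf{Comparison.} Your argument is more modular: it applies to any mean obtained as a monotone limit of iterates of means that obey such strength-function inequalities. However, it silently imports the Titkos--Ramanantoanina Key Lemma through Theorem~\ref{T:lambdaA!B}. It also gives no information about the equality case. The paper's explicit Cauchy--Schwarz computation is exactly what is reused in Theorem~\ref{T:eqgeom}, where equality in Cauchy--Schwarz forces $\widetilde A_0=\alpha\widetilde B_0$ on $\mathcal R$; this is why the paper proves the proposition that way.

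\textbf{A minor correction.} The monotonicity of $(A_n)$, $(B_n)$ and of $(a_n)$, $(b_n)$ holds only from $n=1$ on. For instance, $B_0=B\not\le A\#B$ when $A=0\neq B$, and $a_1>a_0$ when $a_0<b_0$. So you should assert $B_n\le A\#B$ only for $n\ge1$. This does not affect the passage to the limit.
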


\begin{proof}
Let us first consider the associated positive operators
\[
\widetilde{A},\widetilde{B}\in\balg(\hilc).
\]
Since they commute, their geometric mean satisfies
\[
\widetilde A \# \widetilde B
=
\widetilde A^{1/2}\widetilde B^{1/2}.
\]
Assume first that both of them are invertible. Then
\begin{align*}
\lambda_{\widetilde{A}\#\widetilde{B}}(h)
&=
\frac{1}{\|(\widetilde{A}\#\widetilde{B})^{-1/2}h\|^2}
=
\frac{1}{\|(\widetilde{A}^{1/2}\widetilde{B}^{1/2})^{-1/2}h\|^2}  \\
&=
\frac{1}{\sip{\widetilde{A}^{-1/2}h}{\widetilde{B}^{-1/2}h}} \\
&\ge
\frac{1}{\|\widetilde{A}^{-1/2}h\|\cdot\|\widetilde{B}^{-1/2}h\|}
=
\sqrt{\lambda_{\widetilde{A}}(h)\lambda_{\widetilde{B}}(h)}  \\
&=
(\lambda_{\widetilde{A}}\#\lambda_{\widetilde{B}})(h).
\end{align*}
If $\widetilde A$ and $\widetilde B$ are arbitrary, not necessarily invertible, positive operators, then
\[
\widetilde A + \frac{1}{n} I \downarrow \widetilde A
\quad \text{and} \quad
\widetilde B + \frac{1}{n} I \downarrow \widetilde B,
\]
and hence
\[
(\widetilde A + \tfrac{1}{n} I)
\#
(\widetilde B + \tfrac{1}{n} I)
\downarrow
\widetilde A \# \widetilde B.
\]
Therefore, by the preceding argument and Proposition~\ref{P:mondecreasinglambda}, we obtain
\[
\lambda_{\widetilde A \# \widetilde B}
=
\inf_{n\in\mathbb{N}}
\lambda_{(\widetilde A+\frac{1}{n}I)\#(\widetilde B+\frac{1}{n}I)}
\ge
\inf_{n\in\mathbb{N}}
\bigl(\lambda_{\widetilde A+\frac{1}{n}I}\#\lambda_{\widetilde B+\frac{1}{n}I}\bigr)
=
\lambda_{\widetilde A}\#\lambda_{\widetilde B}.
\]
Finally, let $f \in F$ be arbitrary. If $f \notin \ran J_C$, then by formula \eqref{E:lambdaAf-formula}, the right-hand side of the desired inequality is $0$. If, on the other hand, $f = J_C h$ for some $h \in \hilc$, then by Lemma~\ref{L:2.2}, taking into account that
\[
A \# B = J_C (\widetilde A \# \widetilde B) J_C^*,
\]
we obtain
\begin{align*}
\lambda_{A \# B}(f)
&=
\lambda_{\widetilde A \# \widetilde B}(h) \\
&\ge
\lambda_{\widetilde A}(h) \# \lambda_{\widetilde B}(h) \\
&=
\lambda_A(f) \# \lambda_B(f).
\end{align*}
Hence $\lambda_{A\#B}\geq \lambda_A\#\lambda_B$, as claimed.
\end{proof}

In the following theorem, we investigate when the strength function of the geometric mean coincides with the geometric mean of the corresponding strength functions. Whereas, for the arithmetic mean, the answer to this question was that equality occurs only when $A$ and $B$ are linearly dependent. As exhibited in the following examples, the situation for the geometric mean is slightly more subtle. 

In our first example we show that the inequality $\lambda_{A\# B}\geq \lambda_A\#\lambda_B$ 
may be strict even for commuting positive definite matrices.
\begin{example}
Let $\hil:=\dupC^3$ and consider the matrices
\[
A=
\begin{bmatrix}
1&0&0\\
0&4&0\\
0&0&1
\end{bmatrix},
\qquad
B=
\begin{bmatrix}
4&0&0\\
0&1&0\\
0&0&1
\end{bmatrix}.
\]
Then $A$ and $B$ commute, and hence
\[
A\#B=(AB)^{1/2}
=
\begin{bmatrix}
2&0&0\\
0&2&0\\
0&0&1
\end{bmatrix}.
\]
For
$h=\frac{1}{\sqrt2}(1,1,0),$ 
Using the formula
\[
\lambda_T(f)=\frac{1}{\langle T^{-1}f,f\rangle}
\]
for positive definite matrices, for the unite vector $f=\frac{1}{\sqrt2}(1,1,0),$ we have
\[
\lambda_A(f)=\lambda_B(f)=\frac85,\qquad \mbox{whereas}\qquad \lambda_{A\#B}(f)=2,
\]
so $\lambda_{A\#B}(f)>(\lambda_A\#\lambda_B)(h).$ 
\end{example}
 
Nevertheless, in contrast to the case of the arithmetic mean, equality may occur even when the operators in question are not linearly dependent, as the following example shows.
\begin{example}
Let $\hil\coloneqq \dupC^3$ and consider the positive operators
\[
    A:=
    \begin{bmatrix}
        \frac14&0&0\\
        0&1&0\\
        0&0&0
    \end{bmatrix},
    \qquad
    B:=
    \begin{bmatrix}
        \frac34&0&0\\
        0&0&0\\
        0&0&1
    \end{bmatrix}.
\]
Clearly, $A$ and $B$ are not linearly dependent. Since they commute, their
geometric mean is given by
\[
    A\#B
    =
    \begin{bmatrix}
        \frac{\sqrt3}{4}&0&0\\
        0&0&0\\
        0&0&0
    \end{bmatrix}.
\]
Thus $\ran (A\#B)^{1/2}=\dupC e_1,$ 
where $e_1=(1,0,0)$ is the first canonical unit vector.
If $f\notin \dupC e_1$, then $f\notin \ran (A\#B)^{1/2}$, and hence
\[
\lambda_{A\#B}(f)=0=\lambda_A(f)\lambda_B(f).
\]
On the other hand, for $f=\alpha e_1$, $\alpha\neq 0$, we have
\[
\lambda_{A\#B}(\alpha e_1)
=
\frac{\sqrt3}{4|\alpha|^2},
\]
while
\[
\lambda_A(\alpha e_1)=\frac{1}{4|\alpha|^2},
\qquad
\lambda_B(\alpha e_1)=\frac{3}{4|\alpha|^2}.
\]
Therefore
\[
\lambda_{A\#B}(\alpha e_1)
=
\sqrt{\lambda_A(\alpha e_1)\lambda_B(\alpha e_1)}
=
(\lambda_A\#\lambda_B)(\alpha e_1).
\]
Consequently, $\lambda_{A\#B}=\lambda_A\#\lambda_B$.
\end{example}

In the next result, we give a precise characterization of the cases in which equality occurs.

\begin{theorem}\label{T:eqgeom}
Let $A,B \in \lefpoz$ be positive operators. Then the following statements are equivalent:
\begin{enumi}
    \item The strength function of the geometric mean of $A$ and $B$ is equal to the geometric mean of their strength functions:
    \begin{equation}\label{E:lambdaAgeomB}
        \lambda_{A \# B} = \lambda_A \# \lambda_B.
    \end{equation}

    \item There exist a Hilbert space $\hil$, a weakly continuous operator
    $V:E\to \hil$ with dense range, pairwise orthogonal closed subspaces
    $\mathcal M,\mathcal N,\mathcal R\subseteq \hil$ such that
    \[
    \mathcal M\oplus \mathcal N\oplus \mathcal R=\hil,
    \]
    and a scalar $0\leq \gamma\leq 1$ such that
    \begin{equation}\label{E:decompAB}
        A
        =
        \gamma V^*P_{\mathcal R}V+V^*P_{\mathcal M}V,
        \qquad
        B
        =
        (1-\gamma) V^*P_{\mathcal R}V+V^*P_{\mathcal N}V.
    \end{equation}
\end{enumi}
\end{theorem}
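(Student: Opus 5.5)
The plan is to reduce everything to the commuting pair $\widetilde A$, $\widetilde B=I_C-\widetilde A$ on $\hilc$. By Lemma~\ref{L:2.2} and formula \eqref{E:lambdaAf-formula}, condition (i) is equivalent to $\lambda_{\widetilde A\#\widetilde B}(h)=\sqrt{\lambda_{\widetilde A}(h)\lambda_{\widetilde B}(h)}$ for all $h\in\hilc$. Vectors $f\notin\ran J_C$ give $0$ on both sides. For $f=J_Ch$, both sides are expressed through $\widetilde A$, $\widetilde B$, and $\widetilde A\#\widetilde B=(\widetilde A(I-\widetilde A))^{1/2}$. So I will prove the following. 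For a positive contraction $T=\widetilde A$, the identity $\lambda_{\sqrt{T(I-T)}}=\lambda_T\#\lambda_{I-T}$ holds on $\hilc$ if and only if $\sigma(T)\subseteq\{0,\gamma,1\}$ for a single $\gamma\in[0,1]$.

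\textbf{(ii)$\Rightarrow$(i).} Let $P=P_{\mathcal M}$, $Q=P_{\mathcal N}$, $R=P_{\mathcal R}$. Then $A=V^*(\gamma R+P)V$ and $B=V^*((1-\gamma)R+Q)V$ with commuting positive operators on $\hil$. Applying the same argument as in Lemma~\ref{L:2.2}, with $V$ in place of $J_C^*$ (only density of $\ran V$ was used), and noting $C=V^*V$ so that $\hil$ can be identified with $\hilc$, we get that $\widetilde A=\gamma R+P$ has spectrum in $\{0,\gamma,1\}$. It then remains to check the scalar identity on $\hil$.

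For $h=h_M+h_N+h_R$, the operator $\gamma R+P$ has $\lambda=\infty$-type behaviour blocked unless components match. Concretely, $\lambda_{\gamma R+P}(h)>0$ iff $h_N=0$, and then it equals $1/(\|h_M\|^2+\gamma^{-1}\|h_R\|^2)$. Similarly, $\lambda_{(1-\gamma)R+Q}(h)>0$ iff $h_M=0$. The mean is $\sqrt{\gamma(1-\gamma)}\,R$, which is nonzero only if $h=h_R$, giving $\sqrt{\gamma(1-\gamma)}/\|h_R\|^2$. Both sides therefore vanish unless $h\in\mathcal R$, where they agree. The edge cases $\gamma\in\{0,1\}$ are handled identically.

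\textbf{(i)$\Rightarrow$(ii).} Take $\hil=\hilc$ and $V=J_C^*$, and use the spectral measure $E$ of $\widetilde A$. Set $\mathcal M=\ker(I-\widetilde A)$, $\mathcal N=\ker\widetilde A$, and $\mathcal R=(\mathcal M\oplus\mathcal N)^\perp$. Then $\widetilde A|_{\mathcal R}=S$ has spectrum in $[0,1]$ with no eigenvalues $0,1$. It suffices to show $S=\gamma I$.

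Restrict (i) to $h\in\ran$ of spectral projections $E(\Delta)$ with $\Delta$ a compact subset of $(0,1)$. There all three operators are invertible, and by the computation in Proposition~\ref{P:2.7} equality forces
\[\sip{S^{-1/2}h}{(I-S)^{-1/2}h}=\|S^{-1/2}h\|\,\|(I-S)^{-1/2}h\|.\]
This is equality in Cauchy--Schwarz, so $S^{-1/2}h$ and $(I-S)^{-1/2}h$ are parallel for every such $h$. Hence $(I-S)^{-1/2}S^{1/2}$, which is bounded on $E(\Delta)\hil$, maps every vector to a multiple of itself and is a scalar there. Therefore $S/(1-S)$, and so $S$, is a scalar on each $E(\Delta)\hil$. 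Exhausting $(0,1)$ by increasing compacts shows that these scalars must coincide, giving $S=\gamma I$. Then $A=J_C\widetilde AJ_C^*$ and $B=J_C(I-\widetilde A)J_C^*$ have the form \eqref{E:decompAB}.

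The main obstacle is this last step. It requires a careful justification that equality of strengths for the restricted vectors is genuinely the Cauchy--Schwarz equality, using that spectral subspaces reduce all three operators so that the inverse formulas apply there. It also requires dealing with $\mathcal R=\{0\}$, where any $\gamma$ works.
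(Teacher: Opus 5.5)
Your proposal is correct. For (ii)$\Rightarrow$(i) you follow essentially the paper's argument: identify $\hil$ with $\hilc$ via $Vx\mapsto J_C^*x$ and compute the strengths blockwise. One small caveat: ``$\lambda_{\gamma P_{\mathcal R}+P_{\mathcal M}}(h)>0$ iff $h_{\mathcal N}=0$'' requires $\gamma>0$, but this edge case is harmless.

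For (i)$\Rightarrow$(ii) your route is genuinely different. The paper proceeds in four steps:
\begin{itemize}
\item It extracts from (i) the range identity $\ran(\widetilde A\#\widetilde B)^{1/2}=\ran\widetilde A^{1/2}\cap\ran\widetilde B^{1/2}=\ran(\widetilde A:\widetilde B)^{1/2}$, i.e.\ $\ran(\widetilde A(I_C-\widetilde A))^{1/4}=\ran(\widetilde A(I_C-\widetilde A))^{1/2}$.
\item It invokes an external closed-range theorem to conclude that $\mathcal R=\ran\widetilde A(I_C-\widetilde A)$ is closed.
\item It uses Douglas' factorization to show that the restrictions of $\widetilde A$ and $\widetilde B$ to $\mathcal R$ are invertible, and only then applies the Cauchy--Schwarz equality.
\item It obtains the projection structure on $\mathcal R^\perp$ from $\widetilde A_1\widetilde B_1=0$.
\end{itemize}

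You instead define $\mathcal M=\ker(I_C-\widetilde A)$, $\mathcal N=\ker\widetilde A$ and $\mathcal R=E((0,1))\hilc$ spectrally from the outset. The structure on $\mathcal M\oplus\mathcal N$ is then automatic. Invertibility comes for free because you test (i) only on $E(\Delta)\hilc$ with $\Delta\subset(0,1)$ compact, and then exhaust $(0,1)$. The spaces $E(\Delta_n)\hilc$ are nested with dense union in $\mathcal R$, so the scalars must agree.

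Your argument is more elementary and self-contained. It needs neither the closed-range result nor Douglas' lemma, and it uses only the equality on spectral subspaces, not the information about where the strength functions vanish. Closedness of $\mathcal R$ comes out afterwards. The paper's route, in exchange, isolates the range identity as an intermediate consequence of (i), which has some independent interest.

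The step you flag as the main obstacle is routine. Suppose $\mathcal K$ reduces $T\geq0$ and $h\in\mathcal K$. In the Hilbert-space form of \eqref{E:TitkosManana}, replacing a test vector $k$ by $P_{\mathcal K}k$ keeps $\sip{h}{k}$ unchanged and does not increase $\sip{Tk}{k}$, so $\lambda_T(h)=\lambda_{T|_{\mathcal K}}(h)$. Since $E(\Delta)\hilc$ reduces $\widetilde A$, $I_C-\widetilde A$ and $\sqrt{\widetilde A(I_C-\widetilde A)}$, the inverse formulas from the proof of Proposition~\ref{P:2.7} apply there.
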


\begin{proof}
We first prove that \textup{(ii)} implies \textup{(i)}. According to the assumption in \textup{(ii)}, the operators $A$ and $B$ admit the matrix representations
\begin{equation}\label{E:matrixrepr}
    A
    =
    V^*
    \begin{bmatrix}
    I_{\mathcal M}&0&0\\
    0&0&0\\
    0&0&\gamma I_{\mathcal R}
    \end{bmatrix}
    V,
    \qquad
    B
    =
    V^*
    \begin{bmatrix}
    0&0&0\\
    0&I_{\mathcal N}&0\\
    0&0&(1-\gamma) I_{\mathcal R}
    \end{bmatrix}
    V,
\end{equation}
with respect to the orthogonal decomposition
\[
\hil=\mathcal M\oplus \mathcal N\oplus \mathcal R.
\]
In particular,
\[
C=A+B=V^*V.
\]

We next observe that $\hil$ and $\hilc$ are unitarily equivalent. Indeed, the densely defined map
\begin{equation}\label{E:unitary}
 \ran V\to\hilc,
 \qquad
 Vx\mapsto J_C^*x,
\end{equation}
is isometric, since
\begin{align*}
    \|J_C^*x\|_{C}^{2}
    &=
    \dual{Cx}{x}
    =
    \dual{Ax}{x}+\dual{Bx}{x} \\
    &=
    \gamma\dual{V^*P_{\mathcal R}Vx}{x}
    +
    \dual{V^*P_{\mathcal M}Vx}{x}
    +
    (1-\gamma)\dual{V^*P_{\mathcal R}Vx}{x}
    +
    \dual{V^*P_{\mathcal N}Vx}{x} \\
    &=
    \|Vx\|^2
\end{align*}
for every $x\in E$. Since $\ran V$ is dense in $\hil$, the map \eqref{E:unitary} extends to a unitary operator $U:\hil\to\hilc$. Consequently,
\[
    \widetilde{A}
    =
    U(\gamma P_{\mathcal R}+P_{\mathcal M})U^*,
    \qquad
    \widetilde{B}
    =
    U((1-\gamma)P_{\mathcal R}+P_{\mathcal N})U^*.
\]
It follows that
\[
    A\#B
    =
    \sqrt{\gamma(1-\gamma)}\,V^*P_{\mathcal R}V.
\]

We shall use the following simple observation. For every positive operator
$T\in\mathcal B(\hil)$ and every non-zero $h\in\hil$, one has
\begin{equation}\label{E:lambdaVT}
    \lambda_{V^*TV}(V^*h)=\lambda_T(h).
\end{equation}
Indeed, by the density of $\ran V$,
\[
\lambda_{V^*TV}(V^*h)
=
\sup\set{\lambda\geq0}
{\lambda |\langle h,Vx\rangle|^2\leq \langle TVx,Vx\rangle\quad(\forall x\in E)}
=
\lambda_T(h).
\]
Moreover,
\begin{equation}\label{E:lambdaneq0}
    \lambda_{V^*TV}(g)=0
    \qquad
    \bigl(g\in F\setminus \ran V^*T^{1/2}\bigr).
\end{equation}
Indeed, if $\lambda_{V^*TV}(g)>0$, then for some $m>0$,
\[
    |\dual{g}{x}|^2
    \leq
    m\dual{V^*TVx}{x}
    =
    m\|T^{1/2}Vx\|^2
    \qquad (x\in E).
\]
Thus the conjugate-linear functional
\[
    T^{1/2}Vx\mapsto \dual{g}{x}
    \qquad (x\in E)
\]
is well defined and continuous on $\ran T^{1/2}V\subseteq \hil$. By the Riesz representation theorem, there exists $\xi_g\in \hil$ such that
\[
    \dual{g}{x}
    =
    \langle \xi_g,T^{1/2}Vx\rangle
    =
    \dual{V^*T^{1/2}\xi_g}{x}
    \qquad (x\in E).
\]
Consequently, $g=V^*T^{1/2}\xi_g\in\ran V^*T^{1/2}$, proving \eqref{E:lambdaneq0}.

Now let
\[
T:=\gamma P_{\mathcal R}+P_{\mathcal M},
\qquad
S:=(1-\gamma)P_{\mathcal R}+P_{\mathcal N}.
\]
If $g\notin\ran V^*$, then \eqref{E:lambdaneq0} gives
\[
\lambda_A(g)=\lambda_B(g)=\lambda_{A\#B}(g)=0.
\]
On the other hand, if $g=V^*h$ for some $h\in\hil$, then, by \eqref{E:lambdaVT},
\[
\lambda_A(V^*h)=\lambda_T(h),
\qquad
\lambda_B(V^*h)=\lambda_S(h),
\]
and
\[
\lambda_{A\#B}(V^*h)
=
\sqrt{\gamma(1-\gamma)}\,\lambda_{P_{\mathcal R}}(h).
\]
Since $\ran T^{1/2}\cap\ran S^{1/2}=\mathcal R$, we have
\[
\lambda_T(h)\lambda_S(h)=0
\qquad (h\notin \mathcal R).
\]
For $0\neq h\in\mathcal R$, we have
\[
    \lambda_T(h)=\frac{\gamma}{\|h\|^2},
    \qquad
    \lambda_S(h)=\frac{1-\gamma}{\|h\|^2},
    \qquad
    \lambda_{P_{\mathcal R}}(h)=\frac1{\|h\|^2},
\]
hence we obtain
\[
\lambda_{A\#B}(V^*h)
=
\sqrt{\lambda_A(V^*h)\lambda_B(V^*h)}
\]
for every $h\in\hil$. This proves (i).

Next we are going to prove that (i) implies (ii). So assume that equality \eqref{E:lambdaAgeomB} holds. Then the corresponding operators $\widetilde{A}$ and $\widetilde{B}$ on the auxiliary Hilbert space $\hilc$ also satisfy
\begin{equation}\label{eq:main}
\lambda_{\widetilde A \# \widetilde B}
=
\lambda_{\widetilde A} \# \lambda_{\widetilde B}.
\end{equation}
Since the scalar functions associated with the operator means vanish exactly on the intersections of the ranges of the appropriate powers, it follows from \eqref{eq:main} that
\[
\ran(\widetilde A\#\widetilde B)^{1/2}
=
\ran(\widetilde A^{1/2}) \cap \ran(\widetilde B^{1/2})
=
\ran(\widetilde A : \widetilde B)^{1/2}.
\]
Using $\widetilde B = I_C - \widetilde A$, this yields
\[
\ran\bigl(\widetilde A(I_C - \widetilde A)\bigr)^{1/4}
=
\ran\bigl(\widetilde A(I_C - \widetilde A)\bigr)^{1/2}.
\]
This range equality implies that the subspace
\[
\mathcal{R} := \ran\bigl(\widetilde A(I_C - \widetilde A)\bigr)
\]
is closed, according to Theorem~2.6 in \cite{tarcsay2012}; cf. also \cite{fillmore1971operator}. Moreover,
\[
\mathcal{R}
=
\ran(\widetilde A \# \widetilde B)
=
\ran(\widetilde A : \widetilde B).
\]

It is clear that $\mathcal{R}$ is invariant under both $\widetilde A$ and
$\widetilde B = I_C - \widetilde A$. Hence we may consider the restricted operators
\[
\widetilde A_0 := \widetilde A|_{\mathcal{R}},
\qquad
\widetilde B_0 := \widetilde B|_{\mathcal{R}}
\in \balg(\mathcal{R}).
\]
We claim that both $\widetilde A_0$ and $\widetilde B_0$ are boundedly invertible on $\mathcal R$. Indeed, injectivity is immediate. For surjectivity, let $P_{\mathcal R}$ denote the orthogonal projection of $\hilc$ onto $\mathcal{R}$. By Douglas' factorization theorem \cite{douglas}, there exists a bounded operator
\[
D:\hilc\to\ker\bigl(\widetilde A(I_C-\widetilde A)\bigr)^{\perp}
=
\mathcal R
\]
such that
\[
P_{\mathcal R}
=
\widetilde A (I_C - \widetilde A)D.
\]
Thus, for any $z \in \mathcal{R}$, we may write
\[
z
=
P_{\mathcal R}z
=
\widetilde A (I_C - \widetilde A)Dz
=
\widetilde A_0 v
\]
with
\[
v=(I_C-\widetilde A)Dz\in \mathcal{R},
\]
which proves surjectivity. By the Banach isomorphism theorem,
$\widetilde A_0\in\balg(\mathcal R)$ is a topological isomorphism. The same argument shows that $\widetilde B_0$ is invertible as well.

Observe also that
\begin{equation}
    \lambda_{\widetilde{A}}(h)
    =
    \lambda_{\widetilde{A}_0}(h),
    \qquad
    \lambda_{\widetilde{B}}(h)
    =
    \lambda_{\widetilde{B}_0}(h),
    \qquad h\in \mathcal{R}, \ h\neq 0.
\end{equation}
Hence, for such $h$,
\begin{align*}
    \lambda_{\widetilde A_0 \# \widetilde B_0}(h)
    &=
    \frac{1}{\langle(\widetilde A_0\widetilde B_0)^{-1/2}h,h\rangle}
    =
    \frac{1}{\langle \widetilde B_0^{-1/2}h,\widetilde A_0^{-1/2}h\rangle} \\
    &\geq
    \frac{1}{\|\widetilde A_0^{-1/2}h\|\,
    \|\widetilde B_0^{-1/2}h\|}
    =
    (\lambda_{\widetilde A_0} \# \lambda_{\widetilde B_0})(h).
\end{align*}
By \eqref{eq:main}, equality holds throughout. Hence the equality condition in the Cauchy--Schwarz inequality implies that, for every $h\in\mathcal R$, there exists a scalar $\alpha(h)\in\mathbb C$ such that
\[
\widetilde A_0 h = \alpha(h)\widetilde B_0 h.
\]
A standard argument shows that $\alpha(h)$ is independent of $h$, that is,
$\alpha(h)\equiv \alpha$ for some $\alpha\geq0$, and hence
\[
\widetilde A_0=\alpha \widetilde{B}_0.
\]
Since $\widetilde B_0=I_{\mathcal R}-\widetilde A_0$, we obtain
\[
\widetilde A_0 = \gamma I_{\mathcal{R}},
\qquad
\widetilde B_0 = (1-\gamma) I_{\mathcal{R}}
\]
for some $0\leq \gamma\leq 1$.

This yields a decomposition of $\hilc$ as follows. On $\mathcal R^\perp$, the operators
\[
\widetilde{A}_1:=\widetilde A|_{\mathcal R^\perp},
\qquad
\widetilde{B}_1:=\widetilde B|_{\mathcal R^\perp}
\]
are positive operators satisfying
\[
\widetilde{A}_1\widetilde{B}_1=0,
\qquad
\widetilde{B}_1=I_{\mathcal R^\perp}-\widetilde{A}_1.
\]
Thus $\widetilde{A}_1^2=\widetilde{A}_1$, and therefore $\widetilde A_1$ and $\widetilde B_1$ are complementary orthogonal projections. Let
\[
\mathcal M:=\ran \widetilde A_1,
\qquad
\mathcal N:=\ran \widetilde B_1.
\]
Then
\[
\hilc=\mathcal M\oplus \mathcal N\oplus \mathcal R.
\]
Finally, taking
\[
\hil:=\hilc,
\qquad
V:=J_C^*,
\]
we have $\ran V=\ran J_C^*$ dense in $\hilc$, and
\[
A
=
\gamma V^*P_{\mathcal R}V+V^*P_{\mathcal M}V,
\qquad
B
=
(1-\gamma)V^*P_{\mathcal R}V+V^*P_{\mathcal N}V.
\]
This proves \textup{(ii)}.
\end{proof}
 
Before stating the final result of this section, we briefly recall the general Lebesgue decomposition theorem for positive operators on anti-dual pairs. A detailed account of this topic can be found in \cite{TARCSAY2020Lebesgue}; see also \cite{tarcsay-gode2025}.

For any two positive operators $A,B\in\lefpoz$, the pointwise limit
\begin{equation}
    [B]A\coloneqq \lim_{n\to\infty} A:(nB)
\end{equation}
defines a positive operator in $\lefpoz$. Moreover, the decomposition
\[
A=[B]A+(A-[B]A)
\]
is a Lebesgue-type decomposition of $A$ with respect to $B$, in the sense that
\begin{equation*}
   [B]A \ll B,
   \qquad
   A-[B]A\perp B.
\end{equation*}
Furthermore, $[B]A$ is maximal in the following sense: whenever $C\leq A$ and $C\ll B$, it follows that $C\leq [B]A$. The operators $[B]A$ and $A-[B]A$ are called the absolutely continuous and singular parts of $A$ with respect to $B$, respectively. For further details, we refer the reader to \cite{TARCSAY2020Lebesgue}.

In the following corollary, we determine the absolutely continuous components $[B]A$ and $[A]B$ under the equivalent conditions of Theorem~\ref{T:eqgeom}.

\begin{corollary}\label{C:Lebesgue}
Suppose that the positive operators $A,B\in\lefpoz$ satisfy the equivalent conditions of Theorem~\ref{T:eqgeom}, and let
\[
A
=
\gamma V^*P_{\mathcal R}V+V^*P_{\mathcal M}V,
\qquad
B
=
(1-\gamma)V^*P_{\mathcal R}V+V^*P_{\mathcal N}V
\]
be a representation as in Theorem~\ref{T:eqgeom}. Then
\begin{equation*}
    [B]A
    =
    \eta_\gamma V^*P_{\mathcal R}V,
    \qquad
    [A]B
    =
    \theta_\gamma V^*P_{\mathcal R}V,
\end{equation*}
where
\[
\eta_\gamma
=
\begin{cases}
\gamma, & 0\leq \gamma<1,\\
0, & \gamma=1,
\end{cases}
\qquad
\theta_\gamma
=
\begin{cases}
1-\gamma, & 0<\gamma\leq 1,\\
0, & \gamma=0.
\end{cases}
\]
In particular, if $0<\gamma<1$, then
\begin{equation*}
    [B]A=\gamma V^*P_{\mathcal R}V,
    \qquad
    [A]B=(1-\gamma)V^*P_{\mathcal R}V.
\end{equation*}
\end{corollary}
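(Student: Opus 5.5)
We compute $[B]A=\lim_n A:(nB)$ directly from the representation, using the parallel sum formula \eqref{E:parallelquadratic} together with the unitary equivalence $\hil\cong\hilc$ set up in the proof of Theorem~\ref{T:eqgeom}. Put $T:=\gamma P_{\mathcal R}+P_{\mathcal M}$ and $S:=(1-\gamma)P_{\mathcal R}+P_{\mathcal N}$, so that $A=V^*TV$, $B=V^*SV$, and $T,S$ commute (all of them are functions of the three mutually orthogonal projections). The first step is the transfer identity
\[
(V^*TV):(V^*S'V)=V^*(T:S')V
\]
for commuting positive $T,S'\in\balg(\hil)$, where $S'=nS$. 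To prove it, use \eqref{E:parallelquadratic}: the left side at $x$ is $\inf_{y\in E}\{\|T^{1/2}V(x+y)\|^2+\|S'^{1/2}Vy\|^2\}$. Since $\ran V$ is dense in $\hil$ and the integrand is norm-continuous in $Vy$, this infimum equals $\inf_{k\in\hil}\{\|T^{1/2}(Vx+k)\|^2+\|S'^{1/2}k\|^2\}$, which is the Hilbert space variational formula for $\sip{(T:S')Vx}{Vx}$. The Hilbert space formula is just the special case $E=F=\hil$ of the same theorem.

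Next we compute the parallel sum on $\hil$ blockwise. On $\mathcal M$ we have $T=I$ and $nS=0$, so the block is $0$. On $\mathcal N$ we have $T=0$, so the block is $0$. On $\mathcal R$ both operators are scalars, $\gamma$ and $n(1-\gamma)$, so the block is $\frac{\gamma n(1-\gamma)}{\gamma+n(1-\gamma)}\,I_{\mathcal R}$, with the convention $\frac00=0$. This gives
\[
A:(nB)=\frac{n\gamma(1-\gamma)}{\gamma+n(1-\gamma)}\,V^*P_{\mathcal R}V.
\]
As $n\to\infty$ the coefficient tends to $\gamma$ when $\gamma<1$. When $\gamma=1$ it is identically $0$, and the case $\gamma=0$ is consistent since then the coefficient is $0=\gamma$. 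Convergence in the sense of \eqref{E:pointwiseconv} follows because the coefficient is monotone. Hence $[B]A=\eta_\gamma V^*P_{\mathcal R}V$.

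The formula for $[A]B$ follows by symmetry: interchange the roles of $\mathcal M$ and $\mathcal N$ and replace $\gamma$ by $1-\gamma$, which turns $\eta$ into $\theta$. The main obstacle is justifying the transfer identity, in particular the passage from the infimum over $y\in E$ to the infimum over $k\in\hil$, which relies only on density and continuity. Because all the operators involved commute, the blockwise computation itself is routine, and no Douglas-type argument is needed.
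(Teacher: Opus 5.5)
Your proof is correct and follows the paper's own route. You write $A=V^*TV$ and $B=V^*SV$, transfer $A:(nB)=V^*(T:(nS))V$ using the density of $\ran V$, compute the parallel sum blockwise on $\mathcal M\oplus\mathcal N\oplus\mathcal R$, let $n\to\infty$, and get $[A]B$ by symmetry. Your derivation of the transfer identity from the variational formula \eqref{E:parallelquadratic} makes explicit the step that the paper justifies only by pointing back to the proof of Theorem~\ref{T:eqgeom}.
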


\begin{proof}
Put
\[
T:=\gamma P_{\mathcal R}+P_{\mathcal M},
\qquad
S:=(1-\gamma)P_{\mathcal R}+P_{\mathcal N}.
\]
Then
\[
A=V^*TV,
\qquad
B=V^*SV.
\]
Since $\ran V$ is dense, the same argument as in the proof of Theorem~\ref{T:eqgeom} shows that the parallel sums are transported through $V$:
\[
A:(nB)=V^*\bigl(T:(nS)\bigr)V,
\qquad n\in\mathbb N.
\]
Consequently,
\[
[B]A
=
V^*([S]T)V.
\]

It remains to compute $[S]T$ on the orthogonal decomposition
\[
\hil=\mathcal M\oplus\mathcal N\oplus\mathcal R.
\]
On $\mathcal M$, we have $T=I_{\mathcal M}$ and $S=0$, hence the absolutely continuous part of $T$ with respect to $S$ is zero. On $\mathcal N$, we have $T=0$, and hence again there is no contribution. On $\mathcal R$, the restrictions are scalar multiples of the identity:
\[
T|_{\mathcal R}=\gamma I_{\mathcal R},
\qquad
S|_{\mathcal R}=(1-\gamma)I_{\mathcal R}.
\]
Therefore
\[
T:(nS)
=
\frac{n\gamma(1-\gamma)}
     {\gamma+n(1-\gamma)}
P_{\mathcal R}.
\]
Letting $n\to\infty$, we obtain
\[
[S]T
=
\eta_\gamma P_{\mathcal R}.
\]
Thus $[B]A=\eta_\gamma V^*P_{\mathcal R}V.$

The formula for $[A]B$ follows by the same argument, interchanging the roles of $A$ and $B$. Indeed,
\[
[A]B
=
V^*([T]S)V,
\]
and on $\mathcal R$ one has
\[
S:(nT)
=
\frac{n\gamma(1-\gamma)}
     {1-\gamma+n\gamma}
P_{\mathcal R}.
\]
Taking the limit gives
\[
[T]S=\theta_\gamma P_{\mathcal R},
\]
and hence $[A]B=\theta_\gamma V^*P_{\mathcal R}V.$
\end{proof}


\section{Applications}

In this section, we discuss several applications of the general theory developed in the previous sections. Our aim is to illustrate how the results obtained for positive operators on anti-dual pairs specialize to concrete settings of independent interest.

\subsection{Operators on Hilbert spaces}

We begin with the classical Hilbert space setting, which served as the principal motivation for the present generalization. Thus, throughout this subsection, we assume that $E=F=\hil$, where $\hil$ is a Hilbert space, and that the anti-duality coincides with the inner product of $\hil$.

Naturally, all of the results established above remain valid in this particular case. With the exception of Theorem~\ref{T:eqgeom} and Corollary~\ref{C:Lebesgue}, however, they do not substantially extend the corresponding classical results already available in the literature. On the other hand, these two statements admit particularly transparent reformulations in certain special situations, which are worth recording separately.

First, we collect the direct consequences concerning the strength function.

\begin{corollary}
Let $\hil$ be a Hilbert space and let $A,B\in\mathcal B(\hil)_+$ be positive operators. Then the following assertions hold:
\begin{enuma}
    \item $ \lambda_{A:B}=\lambda_A:\lambda_B$, and
        $\lambda_{A!B}=\lambda_A!\lambda_B.$
        \item
        $\lambda_{A\nabla B}\geq \lambda_A\nabla\lambda_B$.  Equality holds for all vectors if and only if $A$ and $B$ are linearly dependent.
    \item 
        $\lambda_{A\#B}\geq \lambda_A\#\lambda_B.$ Equality holds for all vectors if and only if there exist a Hilbert space $\mathcal K$, a bounded operator
    $V:\hil\to \mathcal K$ with dense range, pairwise orthogonal closed subspaces
    $\mathcal M,\mathcal N,\mathcal R\subseteq \mathcal K$ such that
    \[
        \mathcal K=\mathcal M\oplus\mathcal N\oplus\mathcal R,
    \]
    and a scalar $0\leq \gamma\leq 1$ such that
    \[
        A=\gamma V^*P_{\mathcal R}V+V^*P_{\mathcal M}V,
        \qquad
        B=(1-\gamma)V^*P_{\mathcal R}V+V^*P_{\mathcal N}V.
    \]
\end{enuma}
\end{corollary}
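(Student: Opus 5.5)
This corollary is obtained by specializing the general theory to the anti-dual pair $\dual{\hil}{\hil}$ with the inner product as anti-duality. The only preliminary step is to check that this pair is admissible and that all the notions coincide with their classical counterparts.

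First I would note the following facts.
\begin{enumi}
\item The pair is weak-$*$ sequentially complete, by the Banach--Steinhaus (uniform boundedness) theorem together with the Riesz representation theorem.
\item Every positive operator $A:\hil\to\hil$ in the sense $\sip{Ax}{x}\geq 0$ is everywhere defined and Hermitian, hence bounded by the Hellinger--Toeplitz theorem. Therefore $\mathscr L_+(\hil,\hil)=\mathcal B(\hil)_+$.
\item For $f\in\hil$ the rank-one operator $f\otimes f$ is $\|f\|^2P_f$, so the strength function \eqref{E:lambdaAf} agrees with the Busch--Gudder one up to the harmless normalization used throughout.
\end{enumi}
With these facts, the operators $A:B$, $A!B$, $A\nabla B$ and $A\#B$ from Section~3 are defined for $A,B\in\mathcal B(\hil)_+$.

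Assertion (a) then follows directly from the parallel-sum lemma preceding Theorem~\ref{T:lambdaA!B} and from Theorem~\ref{T:lambdaA!B} itself. Assertion (b) is Theorem~\ref{T:ineqarit} applied verbatim.

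For (c), the inequality is Proposition~\ref{P:2.7}. The equality characterization is Theorem~\ref{T:eqgeom}, where the Hilbert space there is renamed $\mathcal K$. The one point needing attention is that a weakly continuous $V:\hil\to\mathcal K$ is the same as a bounded operator. Indeed, a linear map between Hilbert spaces that is continuous for the weak topologies is norm bounded, by the closed graph theorem, and conversely every bounded operator is weakly continuous. Its anti-dual adjoint $V^*:\mathcal K\to\hil$ is exactly the Hilbert space adjoint, so the expressions $V^*P_{\mathcal R}V$ and the others have their usual meaning.

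I expect no real obstacle. The only mild subtlety is the identification of the concepts, in particular confirming that the paper's geometric mean $J_C(\widetilde A\#\widetilde B)J_C^*$ coincides with the Kubo--Ando geometric mean on $\mathcal B(\hil)_+$. For the corollary as stated, however, this identification is not even needed, since $\#$ is interpreted in the paper's sense. If one wants it anyway, it follows from Theorem~\ref{iteration}: both means are limits of the same arithmetic--harmonic iteration, and the classical parallel sum agrees with the variational formula \eqref{E:parallelquadratic}.
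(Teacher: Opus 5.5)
Your proposal is correct and follows the paper's approach. The paper gives no separate argument for this corollary: it only remarks that all earlier results remain valid when $E=F=\hil$, and then applies the parallel-sum lemma, Theorems~\ref{T:lambdaA!B}, \ref{T:ineqarit}, \ref{T:eqgeom} and Proposition~\ref{P:2.7}. Your explicit checks are exactly the verifications the paper leaves to the reader: weak-$*$ sequential completeness, $\mathscr L_+(\hil,\hil)=\mathcal B(\hil)_+$, the normalization $f\otimes f=\|f\|^2P_f$, and that a weakly continuous $V$ with dense range is just a bounded operator whose anti-dual adjoint is the Hilbert adjoint.
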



\subsection{Nonnegative forms}
In this subsection, we apply our results to the case of nonnegative sesquilinear forms on a complex vector space, and show how the results of \cite{titkos2014means} can be recovered directly from the theory developed above.

Let $X$ be a complex vector space, and let $\sform$ and $\tform$ be nonnegative sesquilinear forms on $X$. Denote by $\bar{X}^{*}$ the conjugate algebraic dual of $X$, that is, the vector space of all conjugate-linear functionals on $X$. Then $(X,\bar{X}^{*})$ forms a natural anti-dual pair with respect to the canonical duality
\[
\dual{f}{x}:=f(x),
\qquad x\in X,\ f\in\bar{X}^{*}.
\]
Clearly, $(X,\bar{X}^{*})$ is weak-$*$ sequentially complete, indeed complete.

Moreover, the forms $\sform$ and $\tform$ naturally induce positive operators
\[
S,T:X\to \bar{X}^{*}
\]
defined by
\[
\dual{Sx}{y}\coloneqq \sform(x,y),
\qquad
\dual{Tx}{y}\coloneqq \tform(x,y),
\qquad x,y\in X.
\]
Consider the operator $C:=S+T$, the associated Hilbert space $\hil_C$, the canonical embedding
\[
J_C:\hil_C\to \bar X^{*},
\]
and the corresponding positive operators $\widetilde S$ and $\widetilde T$ on $\hil_C$. Then the forms $\sform$ and $\tform$ admit the following representations:
\[
\dual{J_C^{}\widetilde S J_C^*x}{y}
=
\sform(x,y),
\qquad
\dual{J_C^{}\widetilde T J_C^*x}{y}
=
\tform(x,y),
\qquad x,y\in X.
\]
(Observe that the associated Hilbert space $\hil_C$ is unitarily equivalent to the Hilbert space $\hil_{\sform+\tform}$ defined in the usual way: namely, one equips the quotient space $X/\ker (\sform+\tform)$ with the inner product
\[
\sip{x+\ker (\sform+\tform)}{ y+\ker (\sform+\tform)}
=
\sform(x,y)+\tform(x,y),
\]
and then takes its completion, see \cite{pusz1975functional}.) 

Consequently, the parallel sum, as well as the arithmetic, geometric, and harmonic means of the forms, can be introduced through the associated positive operators as follows:
\begin{align*}
    (\sform:\tform)(x,y)&\coloneqq\dual{(S:T)x}{y},\\
    (\sform\nabla\tform)(x,y)&\coloneqq\dual{(S\nabla T)x}{y}=\frac12(\sform+\tform)(x,y),\\
    (\sform\#\tform)(x,y)&\coloneqq\dual{(S\#T)x}{y},\\
    (\sform!\tform)(x,y)&\coloneqq\dual{(S!T)x}{y}=2(\sform:\tform)(x,y).\\
\end{align*}
Observe that the quadratic form associated with the form $\sform:\tform$ satisfies
\begin{equation*}
    (\sform:\tform)(x,x)
    =
    \inf_{y\in X}
    \{\sform(y,y)+\tform(x-y,x-y)\}.
\end{equation*}
This formula was used as the definition of the parallel sum in \cite{titkos2014means}.
An immediate consequence of Theorem \ref{iteration} is the following result (cf. \cite{titkos2014means}, Theorem~3.4):
\begin{corollary}
Define the sequences by 
 $\sform_0:=\sform$, $\tform_0:=\tform$ and
\begin{equation}\label{E:sk=tk}
    \sform_{k+1}:=\frac{1}{2}(\sform_k+\tform_k),
    \qquad
    \tform_{k+1}:=\sform_k!\tform_k.
\end{equation}
Then
\[
\sform_n\downarrow \sform\#\tform
\qquad\text{and}\qquad
\tform_n\uparrow \sform \#\tform
\]
pointwise on $X$. In particular, 
\[\sform!\tform\leq \sform\#\tform\leq \sform\nabla\tform.\] 
\end{corollary}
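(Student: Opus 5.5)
The plan is to obtain this corollary as a direct translation of Theorem~\ref{iteration} through the dictionary between forms and positive operators set up above. Let $S_k,T_k:X\to\bar X^*$ be the positive operators induced by $\sform_k,\tform_k$, so that $\dual{S_kx}{y}=\sform_k(x,y)$ and $\dual{T_kx}{y}=\tform_k(x,y)$. By the definitions of the means of forms, we have $(\sform_k\nabla\tform_k)(x,y)=\dual{(S_k\nabla T_k)x}{y}$ and $(\sform_k!\tform_k)(x,y)=\dual{(S_k!T_k)x}{y}$. An induction on $k$ therefore shows that $S_0=S$, $T_0=T$, and
\[
S_{k+1}=\tfrac12(S_k+T_k),\qquad T_{k+1}=S_k!T_k .
\]
These are exactly the recursions \eqref{E:AkBk} with $A=S$ and $B=T$.

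Since $(X,\bar X^*)$ is weak-$*$ sequentially complete, Theorem~\ref{iteration} applies to this pair. It gives $S_n\downarrow S\#T$ and $T_n\uparrow S\#T$ pointwise, in the sense of \eqref{E:pointwiseconv}. We translate back using $\dual{(S\#T)x}{y}=(\sform\#\tform)(x,y)$. Monotone decrease of operators corresponds to $\sform_{n+1}(x,x)\le\sform_n(x,x)$, and pointwise convergence gives $\sform_n(x,y)\to(\sform\#\tform)(x,y)$ for all $x,y\in X$. The analogous statements hold for $\tform_n$.

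For the final chain of inequalities, I would invoke the Corollary following Theorem~\ref{iteration}, which gives $S!T\le S\#T\le S\nabla T$. Evaluating the quadratic forms at $x\in X$ yields $\sform!\tform\le\sform\#\tform\le\sform\nabla\tform$. One could alternatively read this off from $\tform_1\le\sform\#\tform\le\sform_1$ directly.

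There is no real obstacle here. The only point needing care is checking that the operator associated with $\sform_k!\tform_k$ is exactly $S_k!T_k$, which holds by the definition of $!$ for forms. This is what makes the induction, and hence the identification of the two recursions, go through.
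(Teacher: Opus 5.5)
Your proposal is correct and is essentially the paper's argument: the paper states the corollary as an immediate consequence of Theorem~\ref{iteration}, and you transport that theorem through the correspondence $\sform\leftrightarrow S$ on the weak-$*$ sequentially complete pair $(X,\bar X^*)$, with the induction $S_{k+1}=\frac12(S_k+T_k)$, $T_{k+1}=S_k!T_k$ as the only thing to check. One small caveat, inherited from the theorem rather than introduced by you: monotonicity only holds from $n=1$ on, because $\sform_0\geq\tform_0$ need not hold, so your claim $\sform_{n+1}(x,x)\le\sform_n(x,x)$ (and the analogous one for $\tform_n$) should be read for $n\geq 1$.
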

The strength of the form $\sform$ along a nonzero functional $f\in X^{*}$ can be defined by
\begin{equation}
\lambda_{\sform}(f)
\coloneqq
\sup\set{\lambda\geq 0}
{\lambda\abs{f(x)}^2\leq \sform(x,x)\quad (\forall x\in X)}.
\end{equation}
This coincides with the strength of the associated operator $S$ in the direction of $f$. We call the function $\lambda_{\sform}:X^*\setminus\{0\}\to\dupR_+$ the \textit{strength function} of the form $\sform$.

In the following result, we summarize the consequences of the results obtained for the strength function in the setting of nonnegative forms.
\begin{corollary}
Let $\sform$ and $\tform$ be nonnegative forms on the complex vector space $X$. Then the following assertions hold:
\begin{enuma}
    \item $\sform\leq \tform$ if and only if $\lambda_{\sform}\leq \lambda_{\tform}.$
    \item $\lambda_{\sform:\tform}=\lambda_{\sform}:\lambda_{\tform},$ and $\lambda_{\sform!\tform}=\lambda_{\sform}!\lambda_{\tform}.$
    \item $\lambda_{\sform\nabla\tform}\geq
        \lambda_{\sform}\nabla\lambda_{\tform}.$
    Equality holds if and only if $\sform$ and $\tform$ are linearly dependent.

    \item $\lambda_{\sform\#\tform}\geq
        \lambda_{\sform}\#\lambda_{\tform}.$ 
     Equality holds if and only if there exist a Hilbert space $\hil$, a linear operator $V:X\to\hil$ with dense range, pairwise orthogonal closed subspaces
    $\mathcal M,\mathcal N,\mathcal R\subseteq \hil$ such that
    \[
        \hil=\mathcal M\oplus \mathcal N\oplus \mathcal R,
    \]
    and a scalar $0\leq \gamma\leq 1$ such that, for all $x,y\in X$,
    \begin{align*}
        \sform(x,y)
        &=
        \gamma\sip{P_{\mathcal R}Vx}{P_{\mathcal R}Vy}
        +
        \sip{P_{\mathcal M}Vx}{P_{\mathcal M}Vy},\\
        \tform(x,y)
        &=
        (1-\gamma)\sip{P_{\mathcal R}Vx}{P_{\mathcal R}Vy}
        +
        \sip{P_{\mathcal N}Vx}{P_{\mathcal N}Vy}.
    \end{align*}
\end{enuma}
\end{corollary}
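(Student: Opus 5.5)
The corollary is a direct translation of the operator-level results into the language of forms. The dictionary is the correspondence $\sform\leftrightarrow S$, $\tform\leftrightarrow T$ between nonnegative forms on $X$ and positive operators $X\to\bar X^*$. This correspondence is additive and positively homogeneous. It is order preserving in both directions, since $\sform\le\tform$ means $\sform(x,x)\le\tform(x,x)$, which is exactly $\dual{Sx}{x}\le\dual{Tx}{x}$. Linear dependence is also preserved. The pair $(X,\bar X^*)$ is weak-$*$ sequentially complete, so every earlier result applies. By the definition just given, $\lambda_{\sform}=\lambda_S$ on nonzero $f\in\bar X^*$. The means of forms are defined through $S:T$, $S\nabla T$, $S\#T$ and $S!T$, so we have $\lambda_{\sform:\tform}=\lambda_{S:T}$, and likewise for the other means.

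With this dictionary, the parts follow in order.

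\textbf{Part (a).} This is the equivalence $A\le B\iff\lambda_A\le\lambda_B$ recalled from \cite{tarcsaygode2024}, applied to $A=S$, $B=T$.

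\textbf{Part (b).} This is the parallel-sum lemma together with Theorem~\ref{T:lambdaA!B}.

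\textbf{Part (c).} This is Theorem~\ref{T:ineqarit}. Linear dependence of $S,T$ is equivalent to linear dependence of $\sform,\tform$.

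\textbf{Part (d).} The inequality is Proposition~\ref{P:2.7}. The equality criterion is Theorem~\ref{T:eqgeom}. The only point needing care is the form of condition (ii) there.

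For part (d), note that weak continuity of a linear map $V:X\to\hil$ is automatic when $X$ carries $\sigma(X,\bar X^*)$. Indeed, for each $h\in\hil$ the functional $x\mapsto\sip{h}{Vx}$ is conjugate linear, so it lies in $\bar X^*$. Hence "weakly continuous with dense range" reduces to "linear with dense range".

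Next, for an orthogonal projection $P$ we have
\[
\dual{V^*PVx}{y}=\sip{PVx}{Vy}=\sip{PVx}{PVy}.
\]
So the operator identities in \eqref{E:decompAB} are equivalent to the stated form identities. This holds in both directions, because a positive operator $X\to\bar X^*$ is determined by its form.

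I expect no genuine obstacle here. The main thing to verify is exactly this matching of hypotheses: $V^*$ exists and maps $\hil$ into $\bar X^*$, and the class of operators in Theorem~\ref{T:eqgeom}(ii) coincides with that described in (d). Everything else is bookkeeping through the dictionary.
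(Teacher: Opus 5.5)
Your proposal is correct and follows the paper's own route. The paper gives no separate argument here: it specializes the operator-level results to the weak-$*$ complete pair $(X,\bar X^*)$ via $\sform\leftrightarrow S$, $\tform\leftrightarrow T$. The results used are the order characterization from \cite{tarcsaygode2024}, the parallel-sum lemma, Theorem~\ref{T:lambdaA!B}, Theorem~\ref{T:ineqarit}, Proposition~\ref{P:2.7} and Theorem~\ref{T:eqgeom}. Your extra checks in (d) are exactly the bookkeeping the paper leaves implicit: that every linear $V:X\to\hil$ is automatically weakly continuous, and that $\dual{V^*PVx}{y}=\sip{PVx}{PVy}$ turns the operator decomposition into the stated form identities.
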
 
\subsection{Representable functionals}

Let $\alg$ be a unital $*$-algebra, and let $w$ and $v$ be two representable positive functionals on $\alg$, that is, assume that
\[
w(x^*y^*yx)\leq M_y w(x^*x),
\qquad
v(x^*y^*yx)\leq M'_y v(x^*x)
\]
for all $x,y\in\alg$ and for some constants $M_y,M'_y\geq 0$ depending on $y$.

Consider the anti-dual pair $(\alg,\bar{\alg}^*)$, where $\bar{\alg}^*$ denotes the conjugate algebraic dual of $\alg$, endowed with the natural anti-duality. Define positive operators $A,B:\alg\to\bar{\alg}^*$ by
\[
\dual{Ax}{y}\coloneqq w(y^*x),
\qquad
\dual{Bx}{y}\coloneqq v(y^*x),
\qquad x,y\in\alg.
\]
If $\alg$ is a $C^*$-algebra, then every positive functional on $\alg$ is automatically continuous and representable. In that case, one may instead work with the anti-dual pair $(\alg,\anti{\alg})$, where $\anti{\alg}$ denotes the topological anti-dual of $\alg$.

As before, let
\[
C:=A+B,
\]
and consider the associated Hilbert space $\hilc$ together with the canonical linear map
\[
J_C:\hilc\to\bar{\alg}^*.
\]
The positive functional
\[
\rho\coloneqq w+v
\]
is also representable. Indeed, let $\pi_\rho(a)$ be the operator initially defined on $\ran C$ by
\[
\pi_\rho(a)Cx\coloneqq C(ax),
\qquad x\in\alg.
\]
Then, by the assumptions above,
\[
\|\pi_\rho(a)Cx\|_C^2
=
\rho(x^*a^*ax)
\leq
(M_a+M'_a)\rho(x^*x)
=
(M_a+M'_a)\|Cx\|_C^2.
\]
Hence $\pi_\rho(a)$ extends uniquely to a bounded operator on $\hilc$, still denoted by $\pi_\rho(a)$, with
\[
\|\pi_\rho(a)\|\leq \sqrt{M_a+M'_a}.
\]
It is straightforward to verify that
\[
\pi_\rho:\alg\to\balg(\hilc)
\]
is a $*$-representation and that
\begin{equation*}
    \rho(a)
    =
    \sipc{\pi_\rho(a)C1}{C1}
    =
    \sipc{\pi_\rho(a)J_C^*1}{J_C^*1},
    \qquad a\in\alg.
\end{equation*}

Now consider the positive contraction $\widetilde A\in\balg(\hilc)$ associated with $A$, that is, the operator satisfying
\[
J_C\widetilde A J_C^* = A.
\]
Then
\[
w(a)
=
\sipc{\widetilde A J_C^*a}{J_C^*1},
\]
and similarly, with $\widetilde B=I_C-\widetilde A$,
\[
v(a)
=
\sipc{\widetilde B J_C^*a}{J_C^*1}
=
\sipc{(I_C-\widetilde A)J_C^*a}{J_C^*1}.
\]

We claim that $\pi_\rho$ commutes with $\widetilde{A},$ that is,
\begin{equation}\label{E:ropi=piro}
    \pi_\rho(a)\widetilde{A}
    =
    \widetilde{A}\pi_\rho(a),
    \qquad a\in\alg.
\end{equation}
Indeed, for every $x,y\in\alg$ we have
\begin{align*}
     \sipc{\pi_\rho(a)\widetilde{A}J_C^*x}{J_C^* y}
     &=
     \sipc{\widetilde{A}J_C^*x}{J_C^*a^*y} \\
     &=
     w(y^*ax) \\
     &=
     \sipc{\widetilde{A}J_C^*(ax)}{J_C^*y} \\
     &=
     \sipc{\widetilde{A}\pi_\rho(a)J_C^*x}{J_C^*y}.
\end{align*}
Since $\ran J_C^*$ is dense in $\hilc$, this proves \eqref{E:ropi=piro}.

As a consequence, $\pi_\rho(a)$ commutes with $\widetilde{A}^{1/2}$ for every $a\in\alg$, and we obtain the following representation of $w$:
\begin{align*}
    w(a)
    &=
    \sipc{\widetilde{A}\pi_\rho(a)J_C^*1}{J_C^*1} \\
    &=
    \sipc{\pi_\rho(a)\widetilde{A}^{1/2}J_C^*1}
    {\widetilde{A}^{1/2}J_C^*1}.
\end{align*}
Thus, with
\[
\zeta_w\coloneqq \widetilde{A}^{1/2}J_C^*1,
\]
we have
\begin{equation}\label{E:w(a)=}
    w(a)=\sipc{\pi_\rho(a)\zeta_w}{\zeta_w}
    \qquad (a\in\alg).
\end{equation}
Analogously, with
\[
\zeta_v\coloneqq (I_C-\widetilde{A})^{1/2}J_C^*1,
\]
we have
\begin{equation}\label{E:v(a)=}
    v(a)=\sipc{\pi_\rho(a)\zeta_v}{\zeta_v}
    \qquad (a\in\alg).
\end{equation}

In view of the above, we define the parallel sum, as well as the harmonic, arithmetic, and geometric means of the functionals $w$ and $v$, through the corresponding positive operators as follows:
\begin{alignat*}{2}
(w:v)(a)      &:= \dual{(A:B)a}{1},      \qquad &
(w!v)(a)      &:= \dual{(A!B)a}{1},\\
(w\nabla v)(a)&:= \dual{(A\nabla B)a}{1}, \qquad &
(w\#v)(a)     &:= \dual{(A\#B)a}{1}.
\end{alignat*}

\begin{theorem}
The functionals $w:v$, $w!v$, $w\nabla v$, and $w\#v$ are representable positive functionals on the $*$-algebra $\alg$. More precisely:
\begin{enuma}
    \item With $\zeta_{w:v}
    :=
    \sqrt{\widetilde{A}:\widetilde{B}}\,J_C^*1,$
        we have
    \[
    (w:v)(a)
    =
    \sipc{\pi_\rho(a)\zeta_{w:v}}{\zeta_{w:v}}.
    \]

    \item With $\zeta_{w!v}
    :=
    \sqrt{\widetilde{A}!\widetilde{B}}\,J_C^*1,$
      we have
    \[
    (w!v)(a)
    =
    \sipc{\pi_\rho(a)\zeta_{w!v}}{\zeta_{w!v}}.
    \]

    \item With $\zeta_{w\nabla v}
    :=
    \sqrt{\widetilde{A}\nabla\widetilde{B}}\,J_C^*1,$
    we have
    \[
    (w\nabla v)(a)
    =
    \sipc{\pi_\rho(a)\zeta_{w\nabla v}}{\zeta_{w\nabla v}}.
    \]
    \item With $\zeta_{w\# v}
    :=
    \sqrt{\widetilde{A}\#\widetilde{B}}\,J_C^*1,$  we have
    \[
    (w\#v)(a)
    =
    \sipc{\pi_\rho(a)\zeta_{w\#v}}{\zeta_{w\#v}}.
    \]
\end{enuma}
\end{theorem}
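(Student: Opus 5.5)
The plan is to handle all four means at once. Each operator mean here has the form $J_C\,\Phi\,J_C^*$, where $\Phi$ is a positive operator in the von Neumann algebra generated by $\widetilde A$. Indeed, $\widetilde B=I_C-\widetilde A$, and
\[
\widetilde A:\widetilde B=\widetilde A-\widetilde A^2,\qquad \widetilde A!\widetilde B=2(\widetilde A-\widetilde A^2),\qquad \widetilde A\nabla\widetilde B=\tfrac12 I_C,\qquad \widetilde A\#\widetilde B=\bigl(\widetilde A(I_C-\widetilde A)\bigr)^{1/2}.
\]
Each $\Phi$ is a continuous function $\phi(\widetilde A)$ of $\widetilde A$ with $\phi\ge0$ on $[0,1]$. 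The identities
\[
A:B=J_C(\widetilde A:\widetilde B)J_C^*,\quad A!B=J_C(\widetilde A!\widetilde B)J_C^*,\quad A\nabla B=J_C(\widetilde A\nabla\widetilde B)J_C^*,\quad A\#B=J_C(\widetilde A\#\widetilde B)J_C^*
\]
were all recorded earlier: the first is the definition of the parallel sum, and the others appear in the discussion preceding the definition of $\#$.

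Fix such a $\Phi=\phi(\widetilde A)\ge0$ and write $\sigma$ for the corresponding mean. First compute
\[
(w\sigma v)(a)=\dual{J_C\Phi J_C^*a}{1}=\sipc{\Phi J_C^*a}{J_C^*1}.
\]
Next, by the definition of $\pi_\rho$, we have $J_C^*a=C(a\cdot1)=\pi_\rho(a)C1=\pi_\rho(a)J_C^*1$. It follows that
\[
(w\sigma v)(a)=\sipc{\Phi\,\pi_\rho(a)J_C^*1}{J_C^*1}.
\]

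The key step is commutation. By \eqref{E:ropi=piro}, $\pi_\rho(a)$ commutes with $\widetilde A$, hence with every polynomial in $\widetilde A$. By norm approximation (Weierstrass on $[0,1]\supseteq\sigma(\widetilde A)$), it therefore commutes with every continuous function of $\widetilde A$, in particular with $\Phi^{1/2}=\sqrt{\phi}(\widetilde A)$. This is exactly the argument the paper used for $\widetilde A^{1/2}$ when deriving \eqref{E:w(a)=}. Writing $\Phi=\Phi^{1/2}\Phi^{1/2}$ and moving one factor across the inner product by self-adjointness gives
\[
(w\sigma v)(a)=\sipc{\pi_\rho(a)\Phi^{1/2}J_C^*1}{\Phi^{1/2}J_C^*1}=\sipc{\pi_\rho(a)\zeta_{w\sigma v}}{\zeta_{w\sigma v}}.
\]
This is the claimed formula in each of (a)--(d).

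Finally, representability follows from this GNS-type formula. Positivity is $(w\sigma v)(a^*a)=\|\pi_\rho(a)\zeta\|_C^2\ge0$. For the representability inequality,
\[
(w\sigma v)(x^*y^*yx)=\|\pi_\rho(y)\pi_\rho(x)\zeta\|_C^2\le\|\pi_\rho(y)\|^2\,(w\sigma v)(x^*x),
\]
with $\|\pi_\rho(y)\|^2\le M_y+M'_y$. The only step needing care is the commutation of $\pi_\rho(a)$ with $\Phi^{1/2}$, and this is settled by the functional-calculus approximation above, since $\pi_\rho(a)$ is bounded.
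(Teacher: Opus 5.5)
Your proposal is correct and follows the paper's proof essentially verbatim: you write each mean as $J_C\phi(\widetilde A)J_C^*$, use $J_C^*a=\pi_\rho(a)J_C^*1$, extend the commutation \eqref{E:ropi=piro} to $\phi(\widetilde A)^{1/2}$ by continuous functional calculus, and split $\phi(\widetilde A)=\phi(\widetilde A)^{1/2}\phi(\widetilde A)^{1/2}$. Your explicit check of positivity and of the representability bound via $\|\pi_\rho(y)\|^2\le M_y+M'_y$ is a small addition that the paper leaves implicit.
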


\begin{proof}
Since $\widetilde B=I_C-\widetilde A$, the operators
\[
\widetilde{A}:\widetilde{B},
\qquad
\widetilde{A}!\widetilde{B},
\qquad
\widetilde{A}\nabla\widetilde{B},
\qquad
\widetilde{A}\#\widetilde{B}
\]
are all obtained from $\widetilde A$ by continuous functional calculus. Hence, by \eqref{E:ropi=piro}, they all commute with $\pi_\rho(a)$ for every $a\in\alg$.

For the parallel sum, we get
\begin{align*}
(w:v)(a)
=
\dual{(A:B)a}{1}
&=
\sipc{(\widetilde{A}:\widetilde{B})\pi_{\rho}(a)J_C^*1}{J_C^*1} \\
&=
\sipc[\big]{\pi_{\rho}(a)\sqrt{\widetilde{A}:\widetilde{B}}\,J_C^*1}
{\sqrt{\widetilde{A}:\widetilde{B}}\,J_C^*1}.
\end{align*}
The remaining three cases are obtained in an analogous way. 
\end{proof}

The following result is an immediate consequence of Theorem~\ref{iteration}.

\begin{corollary}
Define two sequences of functionals $(w_k)_{k\in\mathbb N}$ and $(v_k)_{k\in\mathbb N}$ recursively by
\[
w_0:=w,
\qquad
v_0:=v,
\]
and
\[
w_{k+1}:=\frac12(w_k+v_k),
\qquad
v_{k+1}:=w_k!v_k.
\]
Then each $w_k$ and $v_k$ is a representable positive functional, and
\[
w_k\downarrow (w\# v),
\qquad
v_k\uparrow (w\# v).
\]
In particular,
\[
w!v\leq w\#v\leq w\nabla v.
\]
\end{corollary}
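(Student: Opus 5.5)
The plan is to obtain everything from Theorem~\ref{iteration}, applied to the positive operators $A,B:\alg\to\bar{\alg}^*$ attached to $w$ and $v$, and then to translate back to functionals through the evaluation $a\mapsto\dual{\,\cdot\,a}{1}$. First I would check that this translation is faithful for operators of the form $J_C\widetilde X J_C^*$ with $\widetilde X\in\balg(\hilc)$ a positive operator commuting with $\pi_\rho(\alg)$. For such operators, the argument in the proof of the preceding theorem gives
\[
\dual{J_C\widetilde X J_C^*a}{b}=\sipc{\widetilde X\pi_\rho(b^*a)J_C^*1}{J_C^*1},
\]
because $J_C^*x=\pi_\rho(x)J_C^*1$. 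Hence the operator is determined by the functional $a\mapsto\sipc{\widetilde X\pi_\rho(a)J_C^*1}{J_C^*1}$. This functional equals $\sipc{\pi_\rho(a)\widetilde X^{1/2}J_C^*1}{\widetilde X^{1/2}J_C^*1}$, so it is representable. Moreover, the operator order coincides with the order of functionals in the sense $\phi\le\psi$ iff $\phi(x^*x)\le\psi(x^*x)$ for all $x$. Indeed,
\[
\dual{J_C\widetilde XJ_C^*x}{x}=\sipc{\widetilde X\pi_\rho(x^*x)J_C^*1}{J_C^*1}.
\]

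Next I would run the iteration inside $\hilc$, exactly as in the proof of Theorem~\ref{iteration}. Put $\widetilde A_0=\widetilde A$, $\widetilde B_0=I_C-\widetilde A$, $\widetilde A_{k+1}=\widetilde A_k\nabla\widetilde B_k$ and $\widetilde B_{k+1}=\widetilde A_k!\widetilde B_k$. By induction each of these is a continuous function of $\widetilde A$: they commute, and the parallel sum of commuting positive operators is again a continuous functional calculus expression (equivalently, the operators lie in the commutative $C^*$-algebra generated by $\widetilde A$ and $I_C$). By \eqref{E:ropi=piro}, all of them therefore commute with $\pi_\rho(\alg)$. Then $A_k=J_C\widetilde A_kJ_C^*$ and $B_k=J_C\widetilde B_kJ_C^*$ are the iterates of Theorem~\ref{iteration}.

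The key step is to identify $w_k(a)=\dual{A_ka}{1}$ and $v_k(a)=\dual{B_ka}{1}$, and this is where the main obstacle lies. The issue is that $w_k!v_k$ is defined through the operators associated with $w_k$ and $v_k$. One must check that the operator $a,b\mapsto w_k(b^*a)$ built from the functional $w_k$ coincides with $A_k$, so that the functional recursion matches the operator recursion. This follows from the first step, since $A_k$ has the form $J_C\widetilde XJ_C^*$ with $\widetilde X$ commuting with $\pi_\rho(\alg)$, and hence $\dual{A_ka}{b}=w_k(b^*a)$. There is one further subtlety: for the pair $(w_k,v_k)$ the auxiliary space is $\hil_{C_k}$ with $C_k=A_k+B_k$, not $\hilc$. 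This causes no difficulty, because the harmonic mean is defined intrinsically by the variational formula \eqref{E:parallelquadratic}. So $A_k!B_k$ is the same operator whichever factorization is used, and it equals $J_C(\widetilde A_k!\widetilde B_k)J_C^*$, as established in the proof of Theorem~\ref{iteration}. By the first step, all $w_k$ and $v_k$ are then representable positive functionals.

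Finally, Theorem~\ref{iteration} gives $A_k\downarrow A\#B$ and $B_k\uparrow A\#B$ pointwise. Evaluating at $(x,x)$ yields the monotonicity $w_{k+1}\le w_k$ and $v_k\le v_{k+1}$ in the order of functionals. Evaluating at $(a,1)$ gives the pointwise convergence $w_k(a)\to(w\#v)(a)$ and $v_k(a)\to(w\#v)(a)$. The inequalities $w!v\le w\#v\le w\nabla v$ then follow from the Corollary to Theorem~\ref{iteration} by the same correspondence, or directly from $v_1\le w\#v\le w_1$.
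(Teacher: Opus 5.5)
Your proposal is correct and follows the same route as the paper: apply Theorem~\ref{iteration} to the operators $A,B$ attached to $w,v$, then transfer back to functionals via $a\mapsto\dual{Ta}{1}$ and $\dual{Tx}{x}=\phi(x^*x)$. The paper only asserts that ``the same correspondence holds at each step of the iteration,'' whereas you justify it: you show that $\widetilde A_k,\widetilde B_k$ commute with $\pi_\rho(\alg)$, so that $\dual{A_kx}{y}=w_k(y^*x)$ and each $w_k,v_k$ is representable (note only that, as in the paper's theorem, monotonicity holds from $k=1$ on, since $A_0\ge B_0$ may fail).
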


\begin{proof}
The assertion follows directly from Theorem~\ref{iteration}. Indeed, for every $a\in\alg$,
\[
(w!v)(a^*a)=\dual{(A!B)a}{a},
\qquad
(w\#v)(a^*a)=\dual{(A\#B)a}{a},
\]
and the same correspondence holds at each step of the iteration.
\end{proof}
 
We now introduce the Busch--Gudder strength function for representable positive functionals. Let $w$ be a representable positive functional on $\alg$, and let $f\in \alg^*$ be a nonzero linear functional. Although the definition of the strength function for positive operators on anti-dual pairs was formulated in terms of conjugate-linear functionals, the distinction is immaterial here: the correspondence $f\mapsto \overline f$ is bijective, and the defining inequality only involves the quantity $|f(x)|^2$. For this reason, and in order to keep the notation closer to the usual convention for functionals on $*$-algebras, we define the strength function along linear functionals.

Thus, for $0\neq f\in\alg^*$, we set
\begin{equation}\label{E:strength-reprfunctional}
\lambda_w(f)
\coloneqq
\sup\set{\lambda\geq0}
{\lambda |f(x)|^2\leq w(x^*x)\qquad (\forall x\in\alg)}.
\end{equation}
In what follows, we investigate the basic properties of the strength function $\lambda_w$ and its behaviour with respect to the operator means introduced above.
If $\alg$ is a $C^*$-algebra, then every positive functional on $\alg$ is automatically continuous. Hence, in this case, it is more natural to work with the topological dual $\alg'$ instead of the algebraic dual $\alg^*$, and to consider the dual pair $(\alg,\alg')$. Accordingly, the strength function $\lambda_w$ may be regarded as a function on $\alg'$. As the following proposition shows, this entails no loss of information: outside the topological dual, the strength function automatically vanishes.

\begin{proposition}\label{P:strength-Cstar}
Let $\alg$ be a unital $C^*$-algebra, let $w$ be a positive functional on $\alg$, let $A:\alg\to\bar{\alg}^*$ be the associated positive operator, and let $0\neq f\in\alg^*$ be a linear functional. Then the following assertions hold:
\begin{enuma}
    \item For every $0\neq f\in\alg^*$, 
    \[ \lambda_w(f)=\lambda_A(\bar f), \] 
    where $\bar f\in\bar{\alg}^*$ is defined by $\bar f(x):=\overline{f(x)}$.
    \item If $v$ is another representable positive functional on $\alg$, then $ w\leq v$ if and only if $\lambda_w\leq \lambda_v.$ 
    \item If $\lambda_w(f)>0$, then $f\in\alg'$ and
    \[
        \|f\|
        \leq
        \frac{\|w\|^{1/2}}{\lambda_w(f)^{1/2}}.
    \]

    \item If $f\in\alg'$, then
    \[
        \lambda_w(f)
        \leq
        \frac{\|w\|}{\|f\|^2}.
    \]
    \item Let $(\hil_w,\pi_w,\zeta_w)$ be the GNS triple associated with $w$. Then $\lambda_w(f)>0$
    if and only if there exists a vector $h_f\in\hil_w$ such that
    \[
        f(a)
        =
        \sipc{\pi_w(a)\zeta_w}{h_f},
        \qquad a\in\alg.
    \]
    In this case, the representing vector $h_f$ is unique, and
    \begin{equation}\label{E:lambdawf=hfnorm}
        \lambda_w(f)=\frac{1}{\|h_f\|^2}.
    \end{equation}

\end{enuma}
\end{proposition}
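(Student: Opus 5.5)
We handle the five assertions in order, working throughout with the anti-dual pair $(\alg,\bar{\alg}^*)$ and the operator $A$ given by $\dual{Ax}{y}=w(y^*x)$.

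\emph{Assertion (a).} This is a direct unwinding of definitions. For $x\in\alg$ we have $\dual{Ax}{x}=w(x^*x)$ and $\dual{\bar f}{x}=\overline{f(x)}$, so $\abs{\dual{\bar f}{x}}^2=\abs{f(x)}^2$. Comparing \eqref{E:strength-reprfunctional} with \eqref{E:lambdaAf} then gives $\lambda_w(f)=\lambda_A(\bar f)$.

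\emph{Assertion (b).} Let $B$ denote the operator associated with $v$. We interpret $w\le v$ as $w(x^*x)\le v(x^*x)$ for all $x$, which is exactly $A\le B$. The paper recalls that $A\le B$ is equivalent to $\lambda_A\le\lambda_B$ on $\bar{\alg}^*$. Since $f\mapsto\bar f$ is a bijection $\alg^*\to\bar{\alg}^*$, assertion (a) transfers this equivalence to $\lambda_w\le\lambda_v$.

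\emph{Assertions (c) and (d).} Both rest on the $C^*$-algebra inequality $w(x^*x)\le\|w\|\,\|x\|^2$, valid because $\|w\|=w(1)$ for a positive functional on a unital $C^*$-algebra.
\begin{itemize}
\item[(c)] Suppose $\lambda=\lambda_w(f)>0$. By the Lemma \eqref{E:TitkosManana}, or directly since the supremum in the definition is attained, $\lambda\abs{f(x)}^2\le w(x^*x)\le\|w\|\,\|x\|^2$. Hence $f$ is bounded, and taking the supremum over $\|x\|\le1$ yields $\|f\|^2\le\|w\|/\lambda$.
\item[(d)] If $\lambda_w(f)=0$ there is nothing to prove. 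Otherwise, for every admissible $\lambda$ we have $\lambda\abs{f(x)}^2\le\|w\|\,\|x\|^2$. Taking the supremum over the unit ball gives $\lambda\|f\|^2\le\|w\|$, and the claim follows.
\end{itemize}
We must check that the supremum in the definition is attained; this holds because the admissible set of $\lambda$ is closed.

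\emph{Assertion (e).} This is the substantive part. We identify $\hil_w$ with $\hil_A$, where $Ax\mapsto\pi_w(x)\zeta_w$ is an isometry with dense range, because $\|Ax\|_A^2=w(x^*x)$. Under this identification $J_A^*x=\pi_w(x)\zeta_w$.

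For the forward direction, suppose $\lambda_w(f)>0$. By (a) and Theorem~3.1 of \cite{tarcsaygode2024}, $\bar f\in\ran J_A$, say $\bar f=J_Ah$. Then
\[
\overline{f(a)}=\dual{J_Ah}{a}=\sip{h}{J_A^*a}=\sip{h}{\pi_w(a)\zeta_w},
\]
so $f(a)=\sip{\pi_w(a)\zeta_w}{h}$. We take $h_f=h$.

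Alternatively, one can argue via Riesz representation. The functional $\pi_w(x)\zeta_w\mapsto f(x)$ is well defined and bounded, since $\abs{f(x)}^2\le\lambda^{-1}\|\pi_w(x)\zeta_w\|^2$. It therefore extends to $\hil_w$, and Riesz produces $h_f$.

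For the converse, Cauchy--Schwarz gives $\abs{f(x)}^2\le\|h_f\|^2w(x^*x)$, so $\lambda_w(f)\ge\|h_f\|^{-2}>0$.

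Uniqueness of $h_f$ follows from the density of $\{\pi_w(a)\zeta_w\}$ in $\hil_w$.

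Finally, $\lambda_w(f)=1/\|h_f\|^2$ is formula \eqref{E:lambdaAf-formula}: $\|J_A^{-1}\bar f\|_A=\|h_f\|$, since $J_A$ is injective. This injectivity holds because $\ran J_A^*$ is dense.

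The only delicate step is the identification of the GNS space with $\hil_A$, including that the GNS construction uses $\pi_w(x)\zeta_w$ with the inner product matching $\sipa{Ax}{Ay}=w(y^*x)$. This requires keeping conjugations consistent, which is why the inner product in (e) is written in the stated order.
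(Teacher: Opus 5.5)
Your proof is correct. Parts (a)--(d) follow the paper's argument. In (c) you use that the supremum defining $\lambda_w(f)$ is attained. This holds because the admissible set is closed, and it is bounded above since $f\neq 0$. The paper instead lets $0<\lambda<\lambda_w(f)$ increase to $\lambda_w(f)$, which amounts to the same thing.

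The genuine difference is in (e).

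\emph{The paper's route.} It works entirely inside the GNS space.
\begin{enumerate}
\item The bound $|f(a)|^2\le m\,\|\pi_w(a)\zeta_w\|^2$ makes $\pi_w(a)\zeta_w\mapsto f(a)$ a bounded functional on a dense subspace.
\item The Riesz representation theorem produces $h_f$, and density gives uniqueness.
\item $\|h_f\|^2$ is the squared norm of that functional. It therefore equals the infimum of the admissible constants $m$, which is $1/\lambda_w(f)$.
\end{enumerate}

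\emph{Your route.} You transport the general anti-dual result instead. You identify $\hil_A$ unitarily with $\hil_w$ via $J_A^*x\mapsto\pi_w(x)\zeta_w$. Existence then comes from Theorem~3.1 of \cite{tarcsaygode2024}, namely $\bar f\in\ran J_A$. The value comes from \eqref{E:lambdaAf-formula} together with $\|J_A^{-1}\bar f\|_A=\|h_f\|$, which uses injectivity of $J_A$ and uniqueness of $h_f$.

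\emph{Comparison.} Your route shows that (e) is the GNS form of the factorization $A=J_AJ_A^*$, in keeping with the paper's overall viewpoint. However, it relies on the cited theorem. The paper's Riesz argument, which you also sketch as an alternative, is elementary and self-contained. Your explicit Cauchy--Schwarz proof of the converse (existence of $h_f$ gives $\lambda_w(f)\ge\|h_f\|^{-2}>0$) is a useful addition. The paper leaves that direction implicit in its norm identity.
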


\begin{proof}
The proof of \textup{(a)} and \textup{(b)} is immediate from the definition and from the corresponding properties of the strength functions of positive operators.

For \textup{(c)}, suppose first that $\lambda_w(f)>0$. Then, for every
$0<\lambda<\lambda_w(f)$, we have
\[
    \lambda |f(a)|^2
    \leq
    w(a^*a)
    \leq
    \|w\|\,\|a\|^2,
    \qquad a\in\alg.
\]
Hence
\[
    |f(a)|
    \leq
    \frac{\|w\|^{1/2}}{\lambda^{1/2}}\|a\|,
    \qquad a\in\alg.
\]
Consequently, $f$ is continuous, and letting $\lambda\uparrow \lambda_w(f)$, we obtain that  
\[
    \|f\|
    \leq
    \frac{\|w\|^{1/2}}{\lambda_w(f)^{1/2}}.
\] 
Assertion \textup{(d)} follows immediately from \textup{(c)}.
 
For \textup{(e)}, recall that
\[
    w(a^*a)=\|\pi_w(a)\zeta_w\|^2,
    \qquad a\in\alg.
\]
If $\lambda_w(f)>0$, then for some $m>0$,
\[
    |f(a)|^2
    \leq
    m\,\|\pi_w(a)\zeta_w\|^2,
    \qquad a\in\alg.
\]
Thus the map
\[
    \pi_w(a)\zeta_w\mapsto f(a),
    \qquad a\in\alg,
\]
is a well-defined bounded linear functional on the dense subspace
$\pi_w(\alg)\zeta_w$ of $\hil_w$. By the Riesz representation theorem, there exists a vector $h_f\in\hil_w$ such that
\[
    f(a)
    =
    \sipc{\pi_w(a)\zeta_w}{h_f},
    \qquad a\in\alg.
\]
Since $\pi_w(\alg)\zeta_w$ is dense in $\hil_w$, this vector is unique. Its norm is given by
\begin{align*}
    \|h_f\|^2&=\inf\set{m\geq 0}{\abs{f(a)}^2\leq m\|\pi_w(a)\zeta_w\|^2~(\forall a\in\alg)}\\
    &=\inf\set{m\geq 0}{\abs{f(a)}^2\leq m\cdot w(a^*a)~(\forall a\in\alg)},
\end{align*}
which proves formula \eqref{E:lambdawf=hfnorm}.
\end{proof}
We now translate the general results on strength functions of operator means into the language of representable positive functionals. The proof follows immediately from the corresponding statements for positive operators on anti-dual pairs. 
\begin{proposition}\label{T:strength-representable-functionals}
Let $\alg$ be a unital $*$-algebra, and let $w$ and $v$ be representable positive functionals on $\alg$. Then:
\begin{enuma}
    \item  $\lambda_{w:v}=\lambda_w:\lambda_v$,
        and $\lambda_{w!v}=\lambda_w!\lambda_v$.
    \item  $  \lambda_{w\nabla v}\geq \lambda_w\nabla\lambda_v,$
     and equality holds if and only if $w$ and $v$ are linearly dependent.
    \item $\lambda_{w\# v}\geq \lambda_w\#\lambda_v.$
\end{enuma}
\end{proposition}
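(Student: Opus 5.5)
The plan is to reduce each assertion to the corresponding statement for positive operators on the anti-dual pair $(\alg,\bar{\alg}^*)$, which is weak-$*$ sequentially complete. Let $A,B:\alg\to\bar{\alg}^*$ be the operators induced by $w$ and $v$. The first step is to check that the strength function of a functional coincides with that of the operator attached to the corresponding mean. For this we need $(w\sigma v)(x^*x)=\dual{(A\sigma B)x}{x}$ for $\sigma\in\{:,!,\nabla,\#\}$. By definition $(w\sigma v)(x^*x)=\dual{(A\sigma B)(x^*x)}{1}$, so we must show $\dual{(A\sigma B)(x^*x)}{1}=\dual{(A\sigma B)x}{x}$. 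This follows from the preceding theorem: $A\sigma B=J_C\widetilde\Sigma J_C^*$, where $\widetilde\Sigma$ is obtained from $\widetilde A$ by continuous functional calculus and therefore commutes with $\pi_\rho$. Since $J_C^*(ax)=\pi_\rho(a)J_C^*x$, we get
\[
\dual{(A\sigma B)(x^*x)}{1}=\sipc{\widetilde\Sigma\pi_\rho(x^*)\pi_\rho(x)J_C^*1}{J_C^*1}=\sipc{\widetilde\Sigma J_C^*x}{J_C^*x}=\dual{(A\sigma B)x}{x}.
\]
It follows that the operator induced by the representable functional $w\sigma v$ through $\dual{\cdot x}{y}=(w\sigma v)(y^*x)$ is exactly $A\sigma B$. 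Using $\lambda_w(f)=\lambda_A(\bar f)$ as in Proposition~\ref{P:strength-Cstar}(a), whose proof is purely definitional and works for any $*$-algebra, we obtain $\lambda_{w\sigma v}(f)=\lambda_{A\sigma B}(\bar f)$.

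Parts (a) and (c) then follow at once. Part (a) comes from the parallel sum lemma and Theorem~\ref{T:lambdaA!B}, and part (c) from Proposition~\ref{P:2.7}, in each case applied at $\bar f$. The inequality in (b) follows similarly from Theorem~\ref{T:ineqarit}.

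The only point needing care is the equality case in (b). Equality for all $f$ gives $\lambda_{A\nabla B}(g)=(\lambda_A\nabla\lambda_B)(g)$ for every $g=\bar f$, and $f\mapsto\bar f$ is a bijection of $\alg^*$ onto $\bar{\alg}^*$. Hence Theorem~\ref{T:ineqarit} yields that $A$ and $B$ are linearly dependent, say $A=cB$. Evaluating at $x=1$ gives $w(y^*)=c\,v(y^*)$ for all $y$, so $w=cv$. Conversely, if $w$ and $v$ are linearly dependent, then so are $A$ and $B$, and Theorem~\ref{T:ineqarit} gives equality. The main obstacle is therefore only the identification in the first step, that is, making sure the means of functionals are again representable and induce exactly $A\sigma B$. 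This is exactly what the preceding theorem, together with the commutation relation \eqref{E:ropi=piro}, supplies.
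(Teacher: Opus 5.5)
Your proposal is correct and follows essentially the same route as the paper, which simply states that the result follows immediately from the corresponding statements for positive operators on the anti-dual pair $(\alg,\bar{\alg}^*)$. What you add is an explicit verification, via \eqref{E:ropi=piro}, of the identification $(w\sigma v)(x^*x)=\dual{(A\sigma B)x}{x}$, which the paper uses tacitly; the only small omission is that linear dependence of $A,B$ may take the form $B=cA$ rather than $A=cB$, and that case is handled symmetrically.
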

We record the equality case in the geometric-mean inequality separately, in a form adapted to representable positive functionals.

\begin{theorem}\label{T:geom-equality-representable-functionals}
Let $\alg$ be a unital $*$-algebra, let $w$ and $v$ be representable positive functionals on $\alg$, and put $\rho:=w+v$. Let $(\hil_\rho,\pi_\rho,\zeta_\rho)$ denote the GNS triple associated with $\rho$. Then the following statements are equivalent:
\begin{enumi}
    \item $\lambda_{w\# v}=\lambda_w\#\lambda_v,$
    \item  there exist pairwise orthogonal closed reducing subspaces
         $\mathcal M,\mathcal N,\mathcal R\subseteq \hil_\rho$
    for the representation $\pi_\rho$ such that
    \[
    \hil_\rho=\mathcal M\oplus\mathcal N\oplus\mathcal R,
    \]
    and a scalar $0\leq\gamma\leq1$ such that, for every $a\in\alg$,
    \begin{align*}
        w(a)
        &=
        \gamma\,\sipc{\pi_\rho(a)P_{\mathcal R}\zeta_\rho}
        {P_{\mathcal R}\zeta_\rho}
        +
        \sipc{\pi_\rho(a)P_{\mathcal M}\zeta_\rho}
        {P_{\mathcal M}\zeta_\rho},\\
        v(a)
        &=
        (1-\gamma)\,\sipc{\pi_\rho(a)P_{\mathcal R}\zeta_\rho}
        {P_{\mathcal R}\zeta_\rho}
        +
        \sipc{\pi_\rho(a)P_{\mathcal N}\zeta_\rho}
        {P_{\mathcal N}\zeta_\rho}.
    \end{align*}
\end{enumi}
\end{theorem}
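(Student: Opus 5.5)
The plan is to reduce the statement to Theorem~\ref{T:eqgeom}, applied to the positive operators $A,B:\alg\to\bar{\alg}^*$ induced by $w$ and $v$. Here $\hil=\hilc$ and $V=J_C^*$. The first step is to identify the GNS triple of $\rho$ with $(\hilc,\pi_\rho,J_C^*1)$. This is natural since $\pi_\rho$ was constructed on $\hilc$ via $\pi_\rho(a)Cx=C(ax)$. Moreover, $\zeta_\rho=J_C^*1=C1$ is cyclic, because $\pi_\rho(a)J_C^*1=J_C^*a$ and $\ran J_C^*$ is dense. By uniqueness of the GNS construction we may therefore work in $\hilc$.

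Next, I would check that $\lambda_{w\#v}=\lambda_w\#\lambda_v$ on $\alg^*$ is equivalent to $\lambda_{A\#B}=\lambda_A\#\lambda_B$ on $\bar{\alg}^*$. This follows from Proposition~\ref{P:strength-Cstar}(a), whose proof is purely algebraic and so also works for representable functionals on $*$-algebras. We also need that $\lambda_{w\#v}=\lambda_{A\#B}(\bar{\cdot})$, i.e., that the operator induced by $w\#v$ is $A\#B$. This is where the commutation \eqref{E:ropi=piro} enters:
\[
\dual{(A\#B)x}{y}=\sipc{(\widetilde A\#\widetilde B)J_C^*x}{J_C^*y}=\sipc{(\widetilde A\#\widetilde B)\pi_\rho(y^*x)J_C^*1}{J_C^*1}=(w\#v)(y^*x).
\]

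For (i)$\Rightarrow$(ii), I will not invoke the statement of Theorem~\ref{T:eqgeom} as a black box. Instead I will reuse its proof, which gives more: $\mathcal R=\ran(\widetilde A(I_C-\widetilde A))$, $\widetilde A=\gamma P_{\mathcal R}+P_{\mathcal M}$ and $\widetilde B=(1-\gamma)P_{\mathcal R}+P_{\mathcal N}$, with $\mathcal M,\mathcal N$ the ranges of the spectral projections of $\widetilde A$ onto $\{1\}$ and $\{0\}$ (restricted to $\mathcal R^\perp$). The main point, and the step I expect to need care, is that these subspaces reduce $\pi_\rho$. Since $\pi_\rho(a)$ commutes with $\widetilde A$ by \eqref{E:ropi=piro}, it commutes with every Borel function of $\widetilde A$. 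Here $P_{\mathcal M}=\chi_{\{1\}}(\widetilde A)$, $P_{\mathcal N}=\chi_{\{0\}}(\widetilde A)$, and $P_{\mathcal R}=I-P_{\mathcal M}-P_{\mathcal N}$, which is the spectral projection of $(0,1)$ and hence equals the range projection of $\widetilde A(I-\widetilde A)$. Thus all three projections commute with $\pi_\rho(\alg)$. Then $w(a)=\sipc{\widetilde A\pi_\rho(a)\zeta_\rho}{\zeta_\rho}$, and substituting $\widetilde A=\gamma P_{\mathcal R}+P_{\mathcal M}$ and moving the commuting projections across gives $\gamma\sipc{\pi_\rho(a)P_{\mathcal R}\zeta_\rho}{P_{\mathcal R}\zeta_\rho}+\sipc{\pi_\rho(a)P_{\mathcal M}\zeta_\rho}{P_{\mathcal M}\zeta_\rho}$. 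The formula for $v$ follows in the same way.

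For (ii)$\Rightarrow$(i), let $V:=J_C^*$, which has dense range. Using that the projections reduce $\pi_\rho$ and that $J_C^*x=\pi_\rho(x)\zeta_\rho$, compute
\[
\dual{Ax}{y}=w(y^*x)=\gamma\sipc{P_{\mathcal R}Vx}{P_{\mathcal R}Vy}+\sipc{P_{\mathcal M}Vx}{P_{\mathcal M}Vy},
\]
so $A=\gamma V^*P_{\mathcal R}V+V^*P_{\mathcal M}V$, and likewise for $B$. Theorem~\ref{T:eqgeom} then yields $\lambda_{A\#B}=\lambda_A\#\lambda_B$, and the transfer step above converts this back to (i).
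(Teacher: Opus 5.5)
Your proposal is correct and follows the same route as the paper. You identify $\hil_\rho$ with $\hilc$ via $J_C^*x\mapsto\pi_\rho(x)\zeta_\rho$, transfer the strength-function identity to the operators $A,B$, apply Theorem~\ref{T:eqgeom}, and use the commutation \eqref{E:ropi=piro} to see that the projections reduce $\pi_\rho$. You are also slightly more careful than the paper's sketch in two places: you check explicitly that the operator induced by $w\#v$ is $A\#B$, and you take $\mathcal M,\mathcal N,\mathcal R$ from the proof of Theorem~\ref{T:eqgeom}, which guarantees that they are the spectral subspaces of $\widetilde A$ for $\{1\}$, $\{0\}$ and $(0,1)$. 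That guarantee matters because an arbitrary decomposition with $\gamma\in\{0,1\}$ need not consist of spectral projections of $\widetilde A$.
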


\begin{proof}
Let $A,B:\alg\to\bar{\alg}^*$ be the positive operators associated with $w$ and $v$, respectively, and let $C:=A+B$. Then the Hilbert space $\hilc$ associated with $C$ is naturally unitarily equivalent to the GNS Hilbert space $\hil_\rho$ via the correspondence
\[
J_C^*x\mapsto \pi_\rho(x)\zeta_\rho,
\qquad x\in\alg.
\]
Under this identification, the contractions $\widetilde A$ and $\widetilde B$ associated with $A$ and $B$ satisfy
\[
\widetilde B=I_C-\widetilde A,
\]
and, as shown above,
\[
\pi_\rho(a)\widetilde A=\widetilde A\pi_\rho(a),
\qquad a\in\alg.
\]
By Theorem~\ref{T:eqgeom}, equality
\[
\lambda_{w\# v}=\lambda_w\#\lambda_v
\]
holds if and only if, under the above identification, there exist pairwise orthogonal closed subspaces
\[
\mathcal M,\mathcal N,\mathcal R\subseteq \hil_\rho
\]
with
\[
\hil_\rho=\mathcal M\oplus\mathcal N\oplus\mathcal R
\]
and a scalar $0\leq\gamma\leq1$ such that
\[
\widetilde A=\gamma P_{\mathcal R}+P_{\mathcal M},
\qquad
\widetilde B=(1-\gamma)P_{\mathcal R}+P_{\mathcal N}.
\]
Since $\widetilde A$ commutes with $\pi_\rho(a)$ for every $a\in\alg$, its spectral projections commute with $\pi_\rho(a)$ as well. Hence $\mathcal M,\mathcal N,\mathcal R$ are reducing subspaces for $\pi_\rho$.

Finally, using
\[
w(a)=\sipc{\pi_\rho(a)\widetilde A\zeta_\rho}{\zeta_\rho},
\qquad
v(a)=\sipc{\pi_\rho(a)\widetilde B\zeta_\rho}{\zeta_\rho},
\]
and the fact that the projections $P_{\mathcal M},P_{\mathcal N},P_{\mathcal R}$ commute with $\pi_\rho(a)$, we obtain
\begin{align*}
    w(a)
    &=
    \gamma\,\sipc{\pi_\rho(a)P_{\mathcal R}\zeta_\rho}
    {P_{\mathcal R}\zeta_\rho}
    +
    \sipc{\pi_\rho(a)P_{\mathcal M}\zeta_\rho}
    {P_{\mathcal M}\zeta_\rho},\\
    v(a)
    &=
    (1-\gamma)\,\sipc{\pi_\rho(a)P_{\mathcal R}\zeta_\rho}
    {P_{\mathcal R}\zeta_\rho}
    +
    \sipc{\pi_\rho(a)P_{\mathcal N}\zeta_\rho}
    {P_{\mathcal N}\zeta_\rho}.
\end{align*}
Conversely, if such a decomposition exists, then the corresponding operators associated with $w$ and $v$ have precisely the structural form described in Theorem~\ref{T:eqgeom}. Therefore
\[
\lambda_{w\# v}=\lambda_w\#\lambda_v.
\]
This completes the proof.
\end{proof}

\end{document}